\newif\ifnummat
\newif\ifmoc      
    \DeclareMathAlphabet{\mathcal}{OMS}{cmsy}{m}{n}
    \newtheorem{theorem}{Theorem}[section]
    \newtheorem{lemma}[theorem]{Lemma}
    \theoremstyle{definition}
    \newtheorem{algorithm}[theorem]{Algorithm}
    \theoremstyle{remark}
    \newtheorem{remark}[theorem]{Remark}
    \numberwithin{equation}{section}
\numberwithin{theorem}{section}
\numberwithin{equation}{section}
\DeclareMathOperator{\divg}{div}
\DeclareMathOperator{\curl}{curl}
\DeclareMathOperator{\rot}{rot}
\DeclareMathOperator{\tr}{tr}
\DeclareMathOperator{\osc}{osc}
\newcommand{\Th}{\mathcal{T}_{h}}
\newcommand{\CT}{\mathcal{T}}
\newcommand{\CE}{\mathcal{E}}
\newcommand{\CR}{\mathcal{R}}
\newcommand{\lr}[1]{\llbracket#1\rrbracket}
\title{Quasi-optimal adaptive hybridized  mixed finite element methods for linear elasticity}
\def\myKeywords{linear elasticity, mixed finite element, hybridization, convergence, quasi-optimality}
\def\myAMS{65N12, 65N15, 65N30, 65N50}
\def\myAbstract{
For the planar Navier--Lam\'e equation in mixed form with symmetric stress tensors, we prove the uniform quasi-optimal convergence of an adaptive method based on the hybridized mixed finite element proposed in [Gong, Wu, and Xu: Numer.~Math.,
141 (2019), pp.~569--604]. The main ingredients in the analysis consist of a discrete a posteriori upper bound and a quasi-orthogonality result for the stress field under the mixed boundary condition. Compared with existing adaptive methods, the proposed adaptive algorithm could be directly applied to the traction boundary condition and be easily implemented.}
\begin{document}
%%%%%%%%%%%%%%%%%%%%% Numerische Mathematik %%%%%%%%%%%%%%%%%%%%%%%%%%%%%%%%%%

\ifnummat
   \author{Yuwen Li}
  \institute{\yulShortAuthor : \yulAddress\, {Email:}\yulEmail}
  \date{Received: \today\  / Accepted: date}
  \maketitle
  \begin{abstract}\myAbstract\end{abstract}
  \begin{keywords}\myKeywords\end{keywords}
  \begin{subclass}\myAMS \end{subclass}
  \markboth{%\rebShortAuthor\ , 
  \yulShortAuthor }{\shortTitle}
\fi
%%%%%%%%%%%%%%%%%%%%%%%%%%%%%%%%%%%%%%%%%%%%%%%%%%%%%%%%%%%%%%%%%%%%%%%%%%%%%%

%%%%%%%%%%%%%%%%%%%%% Mathematics of Computation %%%%%%%%%%%%%%%%%%%%%%%%%%%%%

\ifmoc
    \bibliographystyle{amsplain}
    \author[Y.~Li]{Yuwen Li}
    \address{Department of Mathematics, The Pennsylvania State University, University Park, PA 16802}
    %\curraddr{}
    \email{yuwenli925@gmail.com}

    \subjclass[2010]{Primary \myAMS}
    \date{\today}
    %\dedicatory{}
    \begin{abstract}\myAbstract\end{abstract}
    \maketitle
    \markboth{Y.~Li}{Quasi-optimal AMFEM for linear elasticity}
\fi

%%%%%%%%%%%%%%%%%%%%%%%%%%%%%%%%%%%%%%%%%%%%%%%%%%%%%%%%%%%%%%%%%%%%%%%%%%%%%%

%%%%%%%%%%%%%%%%%%%%%%%%%%%%%%%% input files %%%%%%%%%%%%%%%%%%%%%%%%%%%%%%%%%

\section{Introduction}\label{sec1}
Adaptive finite element methods for numerical solutions of partial differential equations has been an active research area since 1980s. Using a sequence of self-adapted graded meshes, adaptive methods can achieve quasi-optimal convergence rate even for problems with singularity arising from, e.g., irregular data or domains with nonsmooth boundary. Convergence and optimality analysis of adaptive methods for symmetric and positive-definite elliptic problems has now reached maturity, see, e.g.,  \cite{Dorfler1996,MNS2000,BinevDahmenDeVore2004,Stevenson2007,CKNS2008,Verfurth2013,DieningKreuzerStevenson2016} and references therein. 

An important model problem in linear elasticity is the Navier--Lam\'e equation, which could be discretized by primal methods and mixed methods. Adaptive mesh refinement based on a posteriori error indicators is essential to deal with nonsmooth boundaries of elastic bodies in practice. For conforming elasticity elements, a robust error estimator  could be found in \cite{Carstensen2005}. In \cite{CR2012}, a quasi-optimal nonconforming adaptive Crouzeix--Raviart element method in primal form was developed under the pure displacement  boundary condition.

Compared with primal methods, mixed methods could easily handle the traction boundary condition and is more natural from a viewpoint of solid mechanics. Conservation of angular momentum is implied by the symmetry of stress tensors of elasticity equations in mixed form. However, mixed methods with strongly imposed symmetry usually leads to higher order polynomial shape functions \cite{AW2002,ArnoldAwanouWinther2008,HZ2014,Hu2015} and a priori error estimates relying on high solution regularity. In such situations, adaptivity is of great importance, see, e.g., \cite{CarstensenDolzmann1998,LonsingVerfurth2004,CGG2019,CHHM2018,LiZikatanov2020arXiv} for a posteriori error estimates of adaptive mixed finite element methods (AMFEMs) in linear elasticity. 

For second order elliptic equations in mixed form, theoretical analysis of AMFEMs is extensive, see, e.g., \cite{CarstensenHoppe2006,BeckerMao2008,CHX2009,HuangXu2012,FFP2014,HLMS2019,Li2019SINUM,Li2020MCOM}. 
The optimality result of  adaptive mixed methods for elasticity equations seems limited in the literature. One reason is that most finite elements for discretizing the symmetric stress tensors require $C^0$ vertex continuity. As a result, stress finite element spaces on nested meshes are not nested as spaces. Recently the work \cite{HuMa2021} develops a quasi-optimal AMFEM for mixed elasticity, based on a modified Hu--Zhang mixed element \cite{HZ2014} enriched carefully at each vertex on nested meshes. 
In the meantime, \cite{GWX2019} presents a hybridized mixed method for elasticity using complete piecewise polynomial stress space without any vertex degrees of freedom. In this paper, we shall adopt the hybridization strategy in \cite{GWX2019} and develop a quasi-optimal  adaptive mixed method for planar linear  elasticity, see \eqref{dismix} and Algorithm \ref{AMFEM}. Without specific treatment, our AMFEM could directly be applied to the (pure) traction boundary condition. Another advantage of this AMFEM is its easy implementation because no explicit continuous local basis is needed in hybridization, see Section \ref{secNE}.

The proposed AMFEM is designed to reduce the stress error $\|\sigma-\sigma_h\|_A$ with a convergence rate free of volumetric locking.  The framework of our analysis is similar to the convergence analysis of AMFEMs for Poisson's equation \cite{CHX2009,HuangXu2012}. However, the $C^1$ nodal space $W_h$ used in our analysis (see Lemma \ref{exact}) is much more complicated than $C^0$ nodal spaces and the well-known $C^1$ Argyris spaces. As a consequence, the regularized local interpolation onto $W_h$ is rather involved, especially when tailored to respect mixed boundary conditions. In addition, we present a detailed construction of the discrete a posteriori upper bound in Theorem \ref{disupper} for the stress error under general boundary conditions, which seems missing in the literature.

In the rest of this section, we introduce the continuous and discrete mixed formulations of the linear elasticity equation in $\mathbb{R}^2$. Let $\Omega$ be a simply connected polygonal domain. Let $\sigma$ and $u$ denote the stress and displacement fields produced by a body force acting on a linearly elastic body that occupies the region $\Omega\subset\mathbb{R}^2$. Then $u$ takes value in $\mathbb{R}^2$ and $\sigma$ takes value in $\mathbb{S}$, the space of symmetric $2\times2$ matrices. Given Lam\'e constants $\mu>0$, $\lambda>0$, define
\begin{align*}
    &\varepsilon (u)=\frac{1}{2}(\nabla u+(\nabla u)^\top),\\
    &\mathbb{C}\sigma=2\mu\sigma+\lambda\tr(\sigma)\delta,
\end{align*}
where $\tr$ denotes the trace of square matrices, and $\delta$ is the $2\times2$ identity matrix. The Navier--Lam\'e equation for planar elasticity reads
\begin{equation}\label{NL}
    \divg\big(\mathbb{C}\varepsilon (u)\big)=f,
\end{equation}
where $\divg$ is the divergence operator applied to each row of $\mathbb{C}\varepsilon(u)$. Given $\tau\in L^2(\Omega,\mathbb{S})$, the compliance tensor is defined as 
\begin{equation*}
    \mathbb{A}\tau=\mathbb{C}^{-1}\tau=\frac{1}{2\mu}\left(\tau-\frac{\lambda}{2\mu+2\lambda}(\tr\tau)\delta\right).
\end{equation*} Let $\overline{\partial\Omega}=\overline{\Gamma}_D\cup\overline{\Gamma}_N$ with relatively open subsets $\Gamma_D$, $\Gamma_N,$ and $\Gamma_D\cap\Gamma_N=\emptyset$. The part $\Gamma_N=\cup_{j=1}^J\Gamma_j$ is the disjoint union of several connected components $\{\Gamma_j\}_{j=1}^J$. Let $n$ be the outward unit normal to $\partial\Omega$.  We consider the mixed formulation of \eqref{NL} under the mixed boundary condition
\begin{equation}\label{ctsLE}
    \begin{aligned}
    &\mathbb{A}\sigma=\varepsilon (u),\\
    &\divg\sigma=f,\\
    &u=g_D\text{ on }\Gamma_D,\\
    &\sigma n=g_N\text{ on }\Gamma_N.
\end{aligned}
\end{equation}
Let $\mathcal{RM}$ be the space of rigid body motions
\begin{equation*}
    \mathcal{RM}=\{(c_1,c_2)^\top+c_3(-x_2,x_1)^\top: c_1,c_2,c_3\in\mathbb{R}\}.
\end{equation*} 
If $\Gamma_N=\emptyset,$  the load $f$ in \eqref{ctsLE} is required to satisfy the compatibility condition \begin{equation*}
    \int_\Omega f\cdot vdx=0,\quad\forall v\in\mathcal{RM}.
\end{equation*}

Given a vector space $\mathbb{V}$, let $L^2(\Omega,\mathbb{V})$ denote the space of $\mathbb{V}$-valued $L^2$-functions on $\Omega.$ Similarly $H^{s}(\Omega, \mathbb{V})$ is the $\mathbb{V}$-valued $H^s(\Omega)$ Sobolev space. Define the spaces
\begin{align*}
    &U:=L^2(\Omega,\mathbb{R}^2)\text{ if }\Gamma_N\neq\emptyset,\quad U:=L^2(\Omega,\mathbb{R}^2)/\mathcal{RM}\text{ if }\Gamma_N=\emptyset,\\
    &\Sigma(g):=\{\tau\in L^2(\Omega,\mathbb{S}): \divg\tau\in L^2(\Omega,\mathbb{R}^2), \tau n=g\text{ on }\Gamma_N\}.
\end{align*} 
Given a subdomain $\Omega_0\subseteq\Omega$, let $(\cdot,\cdot)_{\Omega_0}$ denote the $L^2(\Omega_0)$ inner product and $(\cdot,\cdot)=(\cdot,\cdot)_{\Omega}$. For a 1d submanifold $\Gamma_0\subseteq\overline{\Omega}$, by $\langle\cdot,\cdot\rangle_{\Gamma_0}$ we denote the $L^2(\Gamma_0)$ inner product. 
The variational formulation of \eqref{ctsLE} seeks $\sigma\in\Sigma(g_N)$ and $u\in U$ such that
\begin{subequations}\label{ctsmix}
    \begin{align}
    (\mathbb{A}\sigma,\tau)+(\divg\tau,u)&=\langle\tau n,g_D\rangle_{\Gamma_D},\quad\tau\in\Sigma:=\Sigma(0),\label{mix1}\\
    (\divg\sigma,v)&=(f,v),\quad v\in U.\label{mix2}
\end{align}
\end{subequations}

Let $\mathcal{T}_0$ be a conforming initial macro-triangulation of $\Omega$ and be aligned with $\Gamma_D$, $\Gamma_N$. Let $\mathbb{T}=\{\CT_h\}$ denote a forest of conforming refinement of $\mathcal{T}_0$ indexed by $h$. For $\Th, \CT_H\in\mathbb{T}$, we say $\CT_H\leq\Th$ provided $\CT_h$ is a refinement of $\CT_H.$ We assume $\mathbb{T}$ is shape regular, i.e., there exists a uniform constant $\gamma_0$ such that
\begin{align*}
    \max_{\Th\in\mathbb{T}}\max_{T\in\Th}r_T/\rho_T<\gamma_0<\infty,
\end{align*}
where $r_T$ and $\rho_T$ are radii of circumscribed and inscribed circles of $T$, respectively. 
Let $\mathcal{P}_r(T,\mathbb{V})$ denote the space of $\mathbb{V}$-valued polynomials of degree at most $r$ on $T$.  For an integer $r\geq0,$ the mixed finite element spaces are
\begin{align*}
    \Sigma_h(g)&:=\{\tau_h\in \Sigma(g): \tau_h|_T\in\mathcal{P}_{r+3}(T,\mathbb{S})~\forall T\in\mathcal{T}_h\},\\
    U_h&:=\{v_h\in U: v_h|_T\in\mathcal{P}_{r+2}(T,\mathbb{R}^2)~\forall T\in\mathcal{T}_h\}.
\end{align*}

In the sequel, 
we assume that $g_N$ is a piecewise polynomial on $\Gamma_N$ with $g_N|_e\in\mathcal{P}_{r+3}(e,\mathbb{R}^2)$ for each edge $e$ in $\mathcal{T}_0$ and $g_D$ is a piecewise polynomial on $\Gamma_D$ aligned with $\mathcal{T}_0$. 
The mixed method for \eqref{ctsmix} is to find $\sigma_h\in \Sigma_h(g_N), u_h\in U_h$ such that
\begin{subequations}\label{dismix}
    \begin{align}
    (\mathbb{A}\sigma_h,\tau_h)+(\divg\tau_h,u_h)&=\langle\tau_h n,g_D\rangle_{\Gamma_D},\quad\tau_h\in\Sigma_h:=\Sigma_h(0),\label{dismix1}\\
    (\divg\sigma_h,v)&=(f,v_h),\quad v_h\in U_h\label{dismix2}.
\end{align}
\end{subequations}
When $\CT_H\leq\Th$, it holds that $\Sigma_H\times U_H\subseteq\Sigma_h\times U_h$. Then using the nestedness and  \eqref{dismix}, we obtain the Galerkin orthogonality
\begin{subequations}
    \begin{align}
    (\mathbb{A}(\sigma_h-\sigma_H),\tau_H)+(\divg\tau_H,u_h-u_H)&=0,\quad\tau_H\in\Sigma_H,\label{error1}\\
    (\divg(\sigma_h-\sigma_H),v_H)&=0,\quad v_H\in U_H\label{error2}.
\end{align}
\end{subequations}
It has been shown in \cite{HZ2014,GWX2019} that $\Sigma_h\times U_h$ fulfills the inf-sup condition
\begin{equation}\label{infsup}
    \|v_h\|\leq C\sup_{0\neq\tau_h\in\Sigma_h}\frac{(\divg\tau_h,v_h)}{\|\tau_h\|+\|\divg\tau_h\|},\quad\forall v_h\in U_h,
\end{equation}
where $C$ dependes only on $r$ and $\gamma_0$. However, the construction of the local basis of $\Sigma_h$ is rather involved \cite{GWX2019}. To overcome this difficulty, \eqref{dismix} is implemented using hybridization technique and iterative solvers, see \cite{GWX2019} and Section \ref{secNE}.

Let $\mathcal{V}_h$ denote the set of grid vertices in $\mathcal{T}_h.$ For $r\geq0,$ the classic Arnold--Winther mixed elasticity element spaces are
\begin{align*}
    \Sigma^{\text{AW}}_h&:=\{\tau_h\in \Sigma: \tau_h|_T\in\mathcal{P}_{r+3}(T,\mathbb{S}),  \divg\tau\in\mathcal{P}_{r+1}(T,\mathbb{R}^2)~\forall T\in\Th,\\
    &\qquad\tau_h\text{ is continuous at each }x\in\mathcal{V}_h\},\\
    U^{\text{AW}}_h&:=\{v_h\in U: v_h|_T\in\mathcal{P}_{r+1}(T,\mathbb{R}^2)~\forall T\in\Th\}.
\end{align*}
The Hu--Zhang mixed elasticity element spaces are 
\begin{align*}
    &\Sigma^{\text{HZ}}_h:=\{\tau_h\in \Sigma: \tau_h|_T\in\mathcal{P}_{r+3}(T,\mathbb{S})~\forall T\in\Th,\\
    &\qquad\qquad\tau_h\text{ is continuous at each }x\in\mathcal{V}_h\},\\
    &U^{\text{HZ}}_h:=U_h.
\end{align*}
Due to the continuity constraint of $\Sigma_h^{\text{AW}}$ and $\Sigma_{h}^{\text{HZ}}$ at each vertex,  we note that $\Sigma_H^{\text{AW}}\not\subseteq\Sigma_h^{\text{AW}}$, $\Sigma_H^{\text{HZ}}\not\subseteq\Sigma_h^{\text{HZ}}$. This non-nestedness is the motivation of our analysis of adaptive hybridized MFEM and a major difficulty arising from the analysis of AMFEMs based on Arnold--Winther and Hu--Zhang elements.

The rest of this paper is organized as follows. In Section \ref{seccomplex}, we introduce preliminaries for deriving the discrete a posteriori upper error bound. In Section \ref{secsigma}, we derive the discrete reliability and quasi-orthogonality. Section \ref{secqo} is devoted to the convergence and optimality analysis of the proposed adaptive algorithm. In Section \ref{secdisapprox}, we give proofs of technical results used in our analysis. The numerical experiment is presented in Section \ref{secNE}.

\section{Preliminaries}\label{seccomplex}
For $T\in\Th,$ let $|T|$ denote the area of $T$ and $h_T=|T|^{\frac{1}{2}}$ the size of $T.$ On $\partial T$ let $t$ be the counterclockwise unit tangent and $n$ the outward unit normal to $\partial T$. On $\partial\Omega$ let $t$ be the counterclockwise unit tangent to $\partial\Omega$. In $\mathcal{T}_h$, let $\mathcal{E}_h$, $\mathcal{E}^o_h$, $\mathcal{E}^D_h$ denote the set of edges, interior edges, and edges in $\Gamma_D$, respectively. Let 
\begin{equation*}
\CE^o_h(T)=\{e\in\mathcal{E}^o_h: e\subset\partial T\},\quad\CE^D_h(T)=\{e\in\mathcal{E}^D_h: e\subset\partial T\}.     
\end{equation*}
We use $\|\cdot\|_{\Omega_0}$ to denote the $L^2(\Omega_0)$ norm  and $\|\cdot\|=\|\cdot\|_\Omega$. Each edge $e$ in $\Th$ is assigned with a unit tangent $t_e$ and a unit normal $n_e$. In addition, $t_e$ is counterclockwise oriented and $n_e$ is outward pointing provided $e\subset\partial\Omega$. If $e$ is an interior edge shared by two triangles $T_+$ and $T_-$, let $\lr{\phi}|_e=(\phi|_{T_+})|_e-(\phi|_{T_{-}})|_e$ denote the jump of $\phi$ over $e,$ where $n_e$ is pointing from $T_+$ to $T_-$. 

Given a scalar-valued function $w$ and  a $\mathbb{R}^2$-valued function $\phi=(\phi_1,\phi_2)$, let
\begin{align*}
&\curl w:=\left(-\frac{\partial w}{\partial x_2},\frac{\partial w}{\partial x_1}\right)^\top,\quad\rot\phi:=\frac{\partial\phi_2}{\partial x_1}-\frac{\partial \phi_1}{\partial x_2}.
\end{align*}
For $\mathbb{R}^2$-valued $v=(v_1,v_2)^\top$ and $\mathbb{R}^{2\times2}$-valued $\tau=(\tau_1,\tau_2)^\top,$ let
\begin{align*}
&\curl v:=(\curl v_1,\curl v_2)^\top,\quad\rot \tau:=(\rot\tau_1,\rot\tau_2)^\top.
\end{align*}
For a unit vector $d$, we use $\partial_d$ to denote the directional derivative along $d.$ 
The stress error will be estimated by  $\eta_h=\eta_h(\sigma_h)=\big(\sum_{T\in\Th}\eta_h^2(\sigma_h,T)\big)^\frac{1}{2}$ with the element-wise error indicator given as 
\begin{align*}
    &\eta_h(\sigma_h,T)=\big\{h_T^4\|\rot\rot \mathbb{A}\sigma_h\|^2_T+\sum_{e\in\CE^o_h(T)}\big(h_e\|t_e^\top\lr{\mathbb{A}\sigma_h}t_e\|^2_e\\
    &\quad+h_e^3\|n_e^\top\partial_{t_e}\lr{\mathbb{A}\sigma_h}t_e-\lr{\rot \mathbb{A}\sigma_h}\cdot t_e\|^2_e\big)+\sum_{e\in\CE^D_h(T)}\big(h_e\|t_e^\top((\mathbb{A}\sigma_h)t_e-\partial_{t_e}g_D)\|^2_e\\
    &\quad+h_e^3\|n_e^\top\partial_{t_e}(\mathbb{A}\sigma_h)t_e-(\rot \mathbb{A}\sigma_h)\cdot t_e-n_e\cdot\partial^2_{t_e}g_D\|^2_e\big)\big\}^\frac{1}{2},
\end{align*}
where $h_e$ is the diameter of $e.$
By $P_h$ we denote the $L^2$ projection onto $U_h$. The data oscillation is $\osc_h=\osc_h(f)=\big(\sum_{T\in\Th}\osc^2_h(f,T)\big)^\frac{1}{2},$ where
\begin{align*}
    \osc_h(f,T)&=h_T\|f-P_hf\|_T.
\end{align*}
The expression of $\eta_h$ is the same as existing a posteriori error estimators for the MFEM using the Arnold--Winther and Hu--Zhang elements, see, e.g., \cite{CGG2019,CHHM2018}.

An indispensable ingredient of optimality analysis of AFEMs is the discrete upper bound for the finite element error. 
To construct such a bound, we consider the $C^1$-conforming space
\begin{align*}
    W_h=&\{w_h\in C^1(\overline{\Omega}): w_h|_T\in \mathcal{P}_{r+5}(T)~\forall T\in\Th,\\
    &(\curl w_h)|_{\Gamma_j}\text{ is constant for }1\leq i\leq J\},
\end{align*}
which is a subspace of the  Morgan--Scott $C^1$ element space \cite{MS1975,GiraultScott2002}.
Let $J$ denote the Airy stress function:
\begin{equation*}
    J=\curl\curl=\begin{pmatrix}\frac{\partial^2}{\partial x_2^2}&-\frac{\partial^2}{\partial x_1\partial x_2}\\
-\frac{\partial^2}{\partial x_1\partial x_2}&\frac{\partial^2}{\partial x_1^2}\end{pmatrix}.
\end{equation*}
Due to $J(W_h)\subset\Sigma_h$ and $\divg\circ J=0$, we obtain a well-defined discrete sequence: 
\begin{align}\label{exactsequence}
\begin{CD}
W_h@>{J}>>\Sigma_h@>{\divg}>>U_h@>>>0.
\end{CD}
\end{align}
\begin{lemma}\label{exact}
The sequence \eqref{exactsequence} is exact, i.e., $\ker(\divg|_{\Sigma_h})=J(W_h)$.
\end{lemma}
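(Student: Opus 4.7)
The plan is to establish both inclusions of the claimed equality $\ker(\divg|_{\Sigma_h}) = J(W_h)$.

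For the easy inclusion $J(W_h) \subseteq \ker(\divg|_{\Sigma_h})$, I would use the pointwise identity $\divg\circ J = 0$ together with a direct verification that $Jw_h$ lies in $\Sigma_h$ for each $w_h \in W_h$. Polynomial degree and symmetry are automatic. The $C^1$-regularity of $w_h$ ensures that $(Jw_h)n$ is single-valued across every interior edge so that $Jw_h \in H(\divg,\Omega,\mathbb{S})$. On any straight boundary edge with unit tangent $t$ and outward normal $n$, a direct computation gives the key identities
\[
n^\top(Jw_h)n = \partial_{tt}w_h, \qquad t^\top(Jw_h)n = -\partial_t\partial_n w_h,
\]
so the side condition ``$\curl w_h$ is constant on $\Gamma_j$'' (equivalently $\partial_t w_h$ and $\partial_n w_h$ are constant along each edge of $\Gamma_j$) forces exactly $(Jw_h)n=0$ on $\Gamma_N$.

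For the reverse inclusion, given $\tau_h\in\Sigma_h$ with $\divg\tau_h=0$, I would invoke the continuous elasticity complex on the simply connected domain $\Omega$. Row-wise application of the two-dimensional Poincar\'e lemma yields $\phi\in H^1(\Omega,\mathbb{R}^2)$ with $\tau_h=\curl\phi$; symmetry of $\tau_h$ forces $\divg\phi=0$, and a second application produces a scalar $w\in H^2(\Omega)$, unique up to $\mathcal{P}_1$, with $\phi=\curl w$ and hence $Jw=\tau_h$. Because $\nabla^2 w$ is determined by $\tau_h$ and is piecewise polynomial of degree $r+3$, $w$ is piecewise polynomial of degree $r+5$; together with $w\in H^2(\Omega)$ this automatically upgrades $w$ to $C^1(\overline{\Omega})$.

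The most delicate step, and what I expect to be the main obstacle, is verifying the side condition $\curl w|_{\Gamma_j}$ constant (in particular for pure-traction components). On each edge $e\subset\Gamma_N$ the equation $\tau_h n = 0$ rewrites via the identities above as $\partial_{tt}w=0$ and $\partial_t\partial_n w=0$, so the decomposition $\nabla w = (\partial_n w)n+(\partial_t w)t$ shows $\nabla w$ is constant along $e$. The $C^1$-continuity of $w$ then matches these edge-wise constants at every common vertex of two adjacent edges of $\Gamma_j$, so the single constant value of $\nabla w$ propagates unchanged around the connected polygonal component $\Gamma_j$; consequently $\curl w = (-\partial_2 w,\partial_1 w)^\top$ is constant on each $\Gamma_j$ and $w\in W_h$, completing the reverse inclusion.
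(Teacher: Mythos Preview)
Your proof is correct and follows essentially the same approach as the paper: invoke the continuous elasticity complex on the simply connected domain to produce a scalar potential $w\in H^2(\Omega)$ with $Jw=\tau_h$, read off the piecewise polynomial degree and $C^1$ regularity, and then deduce the side condition on $\Gamma_N$ from $\tau_h n=0$. Your componentwise identities $n^\top(Jw)n=\partial_{tt}w$ and $t^\top(Jw)n=-\partial_t\partial_n w$ together with the corner-propagation argument are just an explicit unpacking of the paper's one-line observation that $\tau_h n=(\curl\curl w)n=-\partial_t(\curl w)$ vanishes on $\Gamma_N$, hence $\curl w$ is constant on each $\Gamma_j$.
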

\begin{proof}
Given $\tau_h\in {\Sigma_h}$ with $\divg\tau_h=0,$ there exists $\phi\in H^1(\Omega)$ such that $\curl\phi=\tau_h$.  Due to the symmetry of $\tau_h,$ it holds that $\divg\phi=0$ and thus $\phi=\curl w$ for some $w\in H^2(\Omega)$. Therefore we obtain $\tau_h=\curl\curl w$, $w|_T\in\mathcal{P}_{r+5}(T)$ for each $T\in\mathcal{T}_h.$ The boundary condition $\tau_h n|_{\Gamma_N}=0$ implies $\partial_t(\curl w)|_{\Gamma_N}=0$ and $\curl w$ is constant on each $\Gamma_j.$ The proof is complete.
\end{proof}

The next theorem is a direct consequence of Lemma \ref{exact}.
\begin{theorem}[discrete Helmholtz decomposition]\label{disdecomp}
\begin{align*}
    \Sigma_h=J(W_h)\oplus\varepsilon^h_{\mathbb{C}}(U_h),
\end{align*}
where $\varepsilon^h_{\mathbb{C}}: U_h\rightarrow\Sigma_h$ is the adjoint operator of $-\divg: \Sigma_h\rightarrow U_h$, i.e.,
\begin{align*}
    (\mathbb{A}\varepsilon_{{\mathbb{C}}}^h(v_h),\tau_h)=-(v_h,\divg\tau_h)\text{ for all }\tau_h\in\Sigma_h.
\end{align*}
\end{theorem}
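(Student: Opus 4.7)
The plan is to combine the exactness statement of Lemma \ref{exact} with the inf-sup condition \eqref{infsup} via a short orthogonality-plus-dimension-count argument, where the $\mathbb{A}$-inner product (which is a genuine inner product on $\Sigma_h$ because $\mathbb{A}$ is symmetric positive definite) plays the role of the ambient Hilbert structure on $\Sigma_h$.

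First I would verify $\mathbb{A}$-orthogonality of the two summands. For any $w_h\in W_h$ and $v_h\in U_h$, the defining property of $\varepsilon^h_{\mathbb{C}}$ gives
\begin{equation*}
(\mathbb{A}\varepsilon^h_{\mathbb{C}}(v_h),J(w_h))=-(v_h,\divg J(w_h))=0,
\end{equation*}
since $\divg\circ J=0$ by construction of the Airy stress function. Consequently, any element lying in $J(W_h)\cap\varepsilon^h_{\mathbb{C}}(U_h)$ is $\mathbb{A}$-orthogonal to itself and therefore vanishes, so the sum $J(W_h)+\varepsilon^h_{\mathbb{C}}(U_h)$ is automatically direct.

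Next I would produce enough dimensions. On the one hand, the inf-sup condition \eqref{infsup} forces $\varepsilon^h_{\mathbb{C}}$ to be injective on $U_h$: if $\varepsilon^h_{\mathbb{C}}(v_h)=0$, then $(v_h,\divg\tau_h)=-(\mathbb{A}\varepsilon^h_{\mathbb{C}}(v_h),\tau_h)=0$ for all $\tau_h\in\Sigma_h$, and \eqref{infsup} yields $v_h=0$; hence $\dim\varepsilon^h_{\mathbb{C}}(U_h)=\dim U_h$. On the other hand, the same inf-sup condition implies that $\divg\colon\Sigma_h\to U_h$ is surjective, so by rank-nullity together with Lemma \ref{exact},
\begin{equation*}
\dim\Sigma_h=\dim\ker(\divg|_{\Sigma_h})+\dim U_h=\dim J(W_h)+\dim U_h.
\end{equation*}

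Combining the directness from the orthogonality step with this dimension identity gives $\Sigma_h=J(W_h)\oplus\varepsilon^h_{\mathbb{C}}(U_h)$. There is no real obstacle: everything rests on the previously established exact sequence and inf-sup condition, and on the fact that $(\mathbb{A}\cdot,\cdot)$ is positive definite. The only detail that needs a line of care is the injectivity of $\varepsilon^h_{\mathbb{C}}$, which is where the inf-sup condition enters; all other steps are essentially formal consequences of $\divg\circ J=0$ and the adjointness definition of $\varepsilon^h_{\mathbb{C}}$.
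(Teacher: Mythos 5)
Your proof is correct, but it is organized differently from the paper's and leans on an extra ingredient. The paper's argument is purely algebraic: with respect to the inner product $(\mathbb{A}\cdot,\cdot)$ on $\Sigma_h$, the range of the adjoint $\varepsilon^h_{\mathbb{C}}$ is the orthogonal complement of $\ker(\divg|_{\Sigma_h})$ --- the finite-dimensional identity $\operatorname{range}(T^*)=\ker(T)^{\perp}$, which needs no injectivity or surjectivity hypotheses --- and Lemma \ref{exact} identifies that kernel with $J(W_h)$, so the decomposition follows at once. You instead prove directness by the same orthogonality computation ($\divg\circ J=0$ plus adjointness) and then supply the spanning part by a dimension count, invoking the inf-sup condition \eqref{infsup} twice: once for injectivity of $\varepsilon^h_{\mathbb{C}}$ and once for surjectivity of $\divg|_{\Sigma_h}$ onto $U_h$. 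That route works, and it yields as by-products the injectivity of $\varepsilon^h_{\mathbb{C}}$ and the exactness of \eqref{exactsequence} at $U_h$, but it makes Theorem \ref{disdecomp} depend on the nontrivial stability result of \cite{HZ2014,GWX2019}, which the paper's two-line proof avoids entirely. Two small points to tighten if you keep your version: \eqref{infsup} directly gives only that no nonzero $v_h\in U_h$ is $L^2$-orthogonal to $\divg\Sigma_h$, i.e.\ surjectivity of $P_h\circ\divg$; to conclude surjectivity of $\divg$ itself you should note $\divg\Sigma_h\subseteq U_h$ (a degree count, immediate when $\Gamma_N\neq\emptyset$). Moreover, in the pure displacement case $\Gamma_N=\emptyset$ the space $U_h$ is a quotient modulo $\mathcal{RM}$, a point your rank-nullity step (like the paper's appeal to ``elementary linear algebra'') treats only implicitly.
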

\begin{proof}
Let $\ker(\divg|_{\Sigma_h})^\perp$ be the orthogonal complement of $\ker(\divg|_{\Sigma_h})$ in $\Sigma_h$ with respect to the weighted inner product $(\mathbb{A}\cdot,\cdot)$. Elementary linear algebra shows that
$$\ker(\divg|_{\Sigma_h})^\perp=\varepsilon_{\mathbb{C}}^h(U_h).$$
Combining it with the exactness $\ker(\divg|_{\Sigma_h})=J(W_h)$ in Lemma \ref{exact}, we obtain
\begin{align*}
    \Sigma_h=\ker(\divg|_{\Sigma_h})\oplus\ker(\divg|_{\Sigma_h})^\perp=J(W_h)\oplus\varepsilon^h_{\mathbb{C}}(U_h),
\end{align*}
which completes the proof.
\end{proof}
\begin{remark}
For the Arnold--Winther and Hu--Zhang elements under $\Gamma_N=\emptyset$, the correct discrete elasticity sequences are
\begin{align*}
\begin{CD}
\widehat{W}_h@>{J}>>\Sigma^{\emph{AW}}_h@>{\divg}>>U_h^{\emph{AW}}@>>>0,
\end{CD}
\end{align*}
and 
\begin{align*}
\begin{CD}
\widehat{W}_h@>{J}>>\Sigma^{\emph{HZ}}_h@>{\divg}>>U_h@>>>0,
\end{CD}
\end{align*}
respectively, where
\begin{align*}
    \widehat{W}_h&=\{w_h\in H^2(\Omega): W_h|_T\in\mathcal{P}_{r+5}(T)\text{ for each }T\in\Th,\\
    &\qquad \nabla^2w_h\text{ is continuous at each }x\in\mathcal{V}_h\}.
\end{align*}
When $r=0,$ $\widehat{W}_h$ is the well-known quintic Argyris finite element space. Due to the extra vertex continuity, it is relatively easy to construct a local basis of $\widehat{W}_h$ and interpolation onto $\widehat{W}_h$. 
\end{remark}

It is noted that $W_h$ is not a standard finite element space. The work \cite{MS1975} gives a set of unisolvent nodal variables and locally supported dual nodal basis of $W_h$, although those degrees of freedom are much more complicated than the Argyris-type $C^1$ space $\widehat{W}_h$. Based on a slightly modified (but complicated) nodal variables,  Girault and Scott \cite{GiraultScott2002} constructed a locally defined and $H^1$-bounded interpolation preserving the homogeneous boundary condition. To derive the discrete reliability of \eqref{dismix}, we present an interpolation $I_h: W_h\rightarrow W_H,$ which is a slight variation of the interpolation in \cite{GiraultScott2002}. Throughout the rest of this paper, we say $c_1\lesssim c_2$ provided $c_1\leq cc_2$ for some generic constant $c$ depending only on $\mu, \Omega,$ $\gamma_0$, $r$. 
\begin{lemma}\label{propIH}
For $\Th, \CT_H\in\mathbb{T}$ with $\CT_H\leq\Th$, let
${\mathcal{R}}_H:=\CT_H\backslash\CT_h$ be the set of refinement elements and 
$$\widetilde{\mathcal{R}}_H:=\{T\in\CT_H: T\cap T^\prime\neq\emptyset\text{ for some }T^\prime\in\CR_H\}$$
denote the enriched collection of refinement elements.
There exists an interpolation $I_H: W_h\rightarrow W_H$ such that for $w_h\in W_h,$
\begin{subequations}
    \begin{align}
w_h-I_Hw_h&=0\text{ at }x\in\mathcal{V}_H,\label{localV}\\
w_h-I_Hw_h&=0\text{ on }T\in\CT_H\backslash\widetilde{\CR}_H,\label{localT}\\
w_h-I_Hw_h&=0\text{ on }\Gamma_N,\label{GammaN}\\
\partial_n(w_h-I_Hw_h)&=0\text{ on }\Gamma_N.\label{dnGammaN}
\end{align}
\end{subequations}
In addition,
\begin{equation}\label{approxG}
    \begin{aligned}
    &\sum_{T\in\CT_H}h_T^{-4}\|w_h-I_Hw_h\|^2_T+h^{-2}_T|w_h-I_Hw_h|^2_{H^1(T)}\\
    &\quad+h_T^{-3}\|w_h-I_Hw_h\|^2_{\partial T}+h_T^{-1}\|\nabla(w_h-I_Hw_h)\|^2_{\partial T}\lesssim|w_h|^2_{H^2(\Omega)}.
\end{aligned}
\end{equation}
\end{lemma}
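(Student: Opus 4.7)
The plan is to construct $I_H$ by adapting the Scott--Zhang-type averaging of Morgan--Scott $C^1$ degrees of freedom developed in \cite{GiraultScott2002}, with two modifications tailored to the present setting: the averaging at nodes lying on $\overline{\Gamma}_N$ is performed along $\Gamma_N$ itself so that the trace and normal-derivative data there are preserved by restriction; and the vertex averaging neighborhoods are chosen small enough that they do not see any refined triangle on an unrefined coarse element. Concretely, I would fix a unisolvent set of Morgan--Scott nodal variables for $W_H$ on $\CT_H$ consisting of pointwise values and first derivatives at each $x\in\mathcal{V}_H$, normal-derivative and trace moments on each $e\in\mathcal{E}_H$, and interior polynomial moments on each $T\in\CT_H$. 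Each pointwise vertex functional $N_\alpha$ is replaced by an average of $w_h$ against a polynomial-reproducing weight supported in a shape-regular patch $\omega_\alpha$ of the vertex---a half-sector of $\Omega$ at a boundary vertex, and a segment of $\Gamma_N$ for data associated with a vertex on $\overline{\Gamma}_N$---while the edge and interior moments are retained unchanged. The interpolant is $I_Hw_h:=\sum_\alpha N_\alpha(w_h)\phi_\alpha$, where $\{\phi_\alpha\}\subset W_H$ is the corresponding dual nodal basis.

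For the locality claims \eqref{localV} and \eqref{localT}, shape regularity of $\mathbb{T}$ allows each $\omega_\alpha$ to be chosen so that, whenever $T\in\CT_H\backslash\widetilde{\CR}_H$, every $\omega_\alpha$ meeting $T$ lies in a common refinement-free patch on which $w_h$ reduces to a single polynomial of degree $r+5$. Since each $N_\alpha$ is polynomial-preserving of that degree, its value on $w_h$ agrees with its value on that polynomial, and the unisolvence of the nodal set on $T$ forces $I_Hw_h|_T = w_h|_T$; the vertex relation \eqref{localV} follows from the same reproducing property applied to a neighborhood of a single coarse vertex. The boundary identities \eqref{GammaN} and \eqref{dnGammaN} hold because every averaging region attached to a vertex or an edge on $\overline{\Gamma}_N$ lies in $\Gamma_N$, so the corresponding functionals depend only on the one-variable restrictions $w_h|_{\Gamma_N}$ and $\partial_n w_h|_{\Gamma_N}$; the constraint that $\curl w_h|_{\Gamma_j}$ is constant together with the retained boundary-edge moments uniquely determines the $C^1$ trace of any $W_H$-function on $\Gamma_N$, so the traces and normal derivatives of $w_h$ and $I_Hw_h$ coincide on the Neumann part.

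The approximation estimate \eqref{approxG} then follows from a standard Bramble--Hilbert argument. Each $N_\alpha$ is a bounded functional on $H^2(\omega_\alpha)$ after scaling to a reference element, so $I_H$ is locally $H^2$-stable on each shape-regular patch $\omega_T$ surrounding $T\in\CT_H$. Because $I_H$ reproduces polynomials of degree at least two on $\omega_T$, one obtains $\|w_h-I_Hw_h\|_T\lesssim h_T^2|w_h|_{H^2(\omega_T)}$, and analogous interior $H^1$ and $\partial T$ trace bounds with the correct powers of $h_T$ follow from standard scaled trace and inverse inequalities. Summing over $T\in\CT_H$ and using the finite overlap of the patches yields \eqref{approxG}.

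The hard part is the boundary construction. Unlike the Argyris situation, the Morgan--Scott nodal variables are considerably more intricate, so no Argyris-type averaging recipe can simply be quoted; one must verify carefully that the chosen DOFs on $\Gamma_N$ fully determine the $C^1$ trace of a $W_H$-function and respect the global constraint that $\curl w_h|_{\Gamma_j}$ be constant on each Neumann component. At mixed-boundary corners $x\in\overline{\Gamma}_N\cap\overline{\Gamma}_D$ the averaging sector must further be chosen so as not to conflict with either boundary piece, and unisolvence on short boundary edges created in the macro-triangulation must be checked.
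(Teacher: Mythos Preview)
Your proposal has a genuine gap at the most delicate point: property~\eqref{localV}. You replace \emph{every} vertex functional by a patch average, and then claim that \eqref{localV} ``follows from the same reproducing property applied to a neighborhood of a single coarse vertex.'' But polynomial reproduction only recovers $w_h(x)$ from the average when $w_h$ restricts to a single polynomial of degree $\le r+5$ on the averaging patch $\omega_\alpha$. For a coarse vertex $x\in\mathcal{V}_H$ adjacent to a refined element, $w_h$ is merely piecewise polynomial on any shape-regular $\omega_\alpha$ (scaled to $h_T$), so the average need not equal $w_h(x)$. Shrinking $\omega_\alpha$ to sit inside a single fine element destroys the $H^2$-scaling needed for \eqref{approxG}. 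Thus your construction cannot deliver \eqref{localV} at every vertex of $\mathcal{V}_H$, only at those far from refinement; and \eqref{localV} is exactly what is used later (in the integration by parts on $\partial T$ in the discrete-reliability proof) to kill the vertex boundary terms.

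The paper's fix is precisely to \emph{not} regularize the zero-order degrees of freedom: since $w_h\in C^1(\overline{\Omega})$, the point values $w_h(a_i)$ are well defined, so one keeps them as honest point evaluations and only replaces the first- and second-order DOFs by edge integrals via Riesz representers on a chosen adjacent coarse edge $e_i$ (with $e_i\subset\partial\Omega$ when $a_i$ is a boundary node). This hybrid---point evaluation for $|D_i|=0$, edge-averaging for $|D_i|\ge 1$---is $H^2$-bounded (not $H^1$-bounded, which is all Girault--Scott achieve), preserves vertex values by definition, and reproduces $w_h$ on any coarse $T$ whose associated edges $e_i$ all lie in unrefined territory, which is exactly $T\in\CT_H\setminus\widetilde{\CR}_H$. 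The paper in fact opens its proof by remarking that the Girault--Scott interpolation does not satisfy \eqref{localV} and that dropping the regularization of the point-evaluation DOFs is the essential modification. A secondary issue: your DOF list omits the second edge derivatives and cross derivatives that are characteristic of the Morgan--Scott basis; these are what make both unisolvence and the $\Gamma_N$ argument work, and they cannot be absorbed into ``trace moments on each $e$.''
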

The proof of Proposition \ref{propIH} is postponed in Section \ref{secdisapprox}.

\section{Discrete reliability and quasi-orthogonality}\label{secsigma}
Let $\|\cdot\|_{\mathbb{A}}$ denote the norm corresponding to $(\mathbb{A}\cdot,\cdot)$. For $\mathcal{M}\subseteq\Th,$ let
\begin{align*}
    \eta_h(\sigma_h,\mathcal{M})&=\big(\sum_{T\in\mathcal{M}}\eta_h^2(\sigma_h,T)\big)^\frac{1}{2},\\
    \osc_h(f,\mathcal{M})&=\big(\sum_{T\in\mathcal{M}}\osc_h^2(f,T)\big)^\frac{1}{2}.
\end{align*}
We shall prove the discrete reliability of the estimator $\eta_h$ and quasi-orthogonality between $\sigma-\sigma_h$ and $\sigma_h-\sigma_H$. 
The analysis relies on the following discrete approximation result, whose proof is left in Section \ref{secdisapprox}.  
\begin{lemma}\label{disapprox}
Let $\Th, \CT_H\in\mathbb{T}$ with $\CT_H\leq\Th$ and $Q_H$ denote the $L^2$-projection onto the space of piecewise rigid body motions $$\mathcal{RM}_H=\{v\in L^2(\Omega,\mathbb{R}^2): v|_T\in\mathcal{RM}\text{ for all }T\in\CT_H\}.$$ It holds that
\begin{equation*} 
\big(\sum_{T\in\CT_H}h_T^{-2}\|v_h-Q_H v_h\|^2_{T}\big)^\frac{1}{2}\lesssim\|\varepsilon_{\mathbb{C}}^h(v_h)\|_{\mathbb{A}}.
\end{equation*}
\end{lemma}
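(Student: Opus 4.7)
My plan is to bound the left-hand side by duality against a suitably chosen $\tau_h\in\Sigma_h$ plugged into the defining identity $(\mathbb{A}\varepsilon^h_{\mathbb{C}}(v_h),\tau_h)=-(v_h,\divg\tau_h)$. The target is to produce $\tau_h\in\Sigma_h$ such that $\divg\tau_h|_T=h_T^{-2}(Q_Hv_h-v_h)|_T$ for every $T\in\CT_H$ and, simultaneously, $\|\tau_h\|^2\lesssim\sum_{T\in\CT_H}h_T^{-2}\|v_h-Q_Hv_h\|_T^2$. Given such a $\tau_h$, the $L^2$-orthogonality $(Q_Hv_h,v_h-Q_Hv_h)_T=0$ yields $-(v_h,\divg\tau_h)=\sum_{T\in\CT_H}h_T^{-2}\|v_h-Q_Hv_h\|_T^2$, so Cauchy--Schwarz on the duality pairing together with the $\lambda$-uniform upper bound $\|\tau_h\|_{\mathbb{A}}\lesssim\|\tau_h\|$ lets me cancel one factor of the sum and conclude.

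To construct $\tau_h$, I would proceed in two stages. First, on each $T\in\CT_H$ I would solve the continuous local elasticity problem: find $\psi_T\in H^1(T,\mathbb{S})$ with $\divg\psi_T=h_T^{-2}(Q_Hv_h-v_h)|_T$ in $T$ and $\psi_T n=0$ on $\partial T$. Solvability is exactly the compatibility condition $(Q_Hv_h-v_h,m)_T=0$ for all $m\in\mathcal{RM}$, which is built into the definition of $Q_H$ (extending $m$ by zero on $\Omega\setminus T$ produces an element of $\mathcal{RM}_H$). A continuous inf-sup for the symmetric-stress Stokes-type problem on a reference triangle, combined with Korn's inequality, shape regularity, and scaling, delivers $\|\psi_T\|_T+h_T|\psi_T|_{H^1(T)}\lesssim h_T^{-1}\|v_h-Q_Hv_h\|_T$. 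Patching $\psi|_T:=\psi_T$ gives a global $\psi\in\Sigma$ (the vanishing normal trace on each $\partial T$ provides both $H(\divg)$-conformity across interior edges and $\psi n|_{\Gamma_N}=0$), with $\divg\psi\in U_h$ since $\mathcal{RM}_H\subseteq U_h$. Second, I would apply a Fortin-type canonical interpolation $\Pi_h$ associated with the stress element of \cite{GWX2019}, with the commuting property $\divg\Pi_h\psi=P_{U_h}(\divg\psi)=\divg\psi$, the trace preservation $(\Pi_h\psi)n|_{\Gamma_N}=0$, and the locally sharp first-order stability $\|\Pi_h\psi\|_{T'}\lesssim\|\psi\|_{T'}+h_{T'}|\psi|_{H^1(T')}$ on every $T'\in\Th$. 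Setting $\tau_h:=\Pi_h\psi$, grouping fine elements by their parent $T\in\CT_H$ (using $h_{T'}\leq h_T$), and inserting the bound on $\psi_T$ from the first stage yields the desired estimate $\|\tau_h\|^2\lesssim\sum_{T\in\CT_H}h_T^{-2}\|v_h-Q_Hv_h\|_T^2$.

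I expect the main obstacle to be the Fortin interpolation: one must invoke or adapt a divergence-commuting operator into $\Sigma_h$ that preserves the homogeneous normal trace on $\Gamma_N$ and enjoys first-order stability when applied to a $\psi$ that is merely piecewise $H^1$ on $\CT_H$ (hence $H^1$ on each fine element $T'\in\Th$, which is enough provided the degrees of freedom are edge and interior moments). The analogue for the Hu--Zhang element is standard in the literature; because $\Sigma_h$ carries no vertex-continuity constraints, the interpolation of \cite{GWX2019} should apply or adapt directly. A subsidiary technical point is to check that the continuous elasticity inf-sup constant on each $T\in\CT_H$ is bounded below uniformly under the shape regularity of $\mathbb{T}$, which is precisely what makes the scaling of $\psi_T$ uniform in $h_T$.
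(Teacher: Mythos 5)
Your duality skeleton is sound: with a $\tau_h\in\Sigma_h$ satisfying $\divg\tau_h|_T=h_T^{-2}(Q_Hv_h-v_h)|_T$ and $\|\tau_h\|^2\lesssim\sum_{T\in\CT_H}h_T^{-2}\|v_h-Q_Hv_h\|_T^2$, the orthogonality of $Q_H$, the identity $(\mathbb{A}\varepsilon^h_{\mathbb{C}}(v_h),\tau_h)=-(v_h,\divg\tau_h)$ and the $\lambda$-robust bound $\|\tau_h\|_{\mathbb{A}}\lesssim\|\tau_h\|$ would indeed close the argument, and you correctly see that the plain global inf-sup \eqref{infsup} cannot be used because it produces the wrong $h$-weights. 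The genuine gap is the ingredient you defer to the end: a Fortin operator $\Pi_h$ into $\Sigma_h$ that commutes with the divergence, preserves $\tau n=0$ on $\Gamma_N$, and is stable \emph{elementwise} in the form $\|\Pi_h\psi\|_{T'}\lesssim\|\psi\|_{T'}+h_{T'}|\psi|_{H^1(T')}$ for $\psi$ that is merely piecewise $H^1$ over the coarse mesh. No such operator is available "off the shelf" for this $\Sigma_h$: the space has no known unisolvent set of purely local edge/interior moments (the paper stresses that even a local basis is rather involved, which is precisely why \cite{GWX2019} hybridize instead), and for the symmetric elements that do come with explicit degrees of freedom (Arnold--Winther, Hu--Zhang) the canonical interpolation uses vertex values of the stress, which in 2D are not controlled by $\|\psi\|_{T'}+h_{T'}|\psi|_{H^1(T')}$; the usual remedy (Cl\'ement-type regularization) destroys pure element locality, requires re-establishing the commuting property, and its patch-based stability is problematic here because vertex patches of fine elements straddle coarse interfaces where your $\psi$ is only $H(\divg)$, not $H^1$. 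A secondary, smaller gap is the first stage: the existence of $\psi_T\in H^1(T,\mathbb{S})$ with prescribed divergence, $\psi_T n=0$ on all of $\partial T$, and $|\psi_T|_{H^1(T)}\lesssim\|\divg\psi_T\|_T$ uniformly over shape-regular triangles amounts to $\lambda$-independent $H^2$ traction regularity of an auxiliary elasticity problem on triangles with uniform constants; this is plausible but needs an argument (e.g.\ a scaling/compactness argument in the spirit of Lemma \ref{Korn1}), which you only gesture at.

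By contrast, the paper circumvents any commuting interpolation: Lemma \ref{disapprox1} (an averaging operator into continuous Lagrange elements, a Poincar\'e inequality, and the scaled Korn inequality of Lemma \ref{Korn1}) bounds $\sum_{T\in\CT_H}h_T^{-2}\|v_h-Q_Hv_h\|_T^2$ by the broken norm $|v_h|_{1,h}^2$, and then the already-established discrete inf-sup \eqref{infsup1} for the Hu--Zhang subspace $\Sigma_h^{\text{HZ}}\subset\Sigma_h$, cited from \cite{CHH2018b}, together with $\|\tau_h\|_{\mathbb{A}}\lesssim\|\tau_h\|$, gives $|v_h|_{1,h}\lesssim\|\varepsilon^h_{\mathbb{C}}(v_h)\|_{\mathbb{A}}$. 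In effect, the locally weighted right inverse of $\divg$ you propose to build by hand is exactly the hard content hidden in \eqref{infsup1}; to complete your route you would have to reconstruct that construction (or cite a locally stable, commuting, boundary-condition-preserving interpolation for a symmetric stress element), whereas the paper's splitting lets it use the inf-sup as a black box.
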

The space $\mathcal{RM}_H$ can be viewed as a broken rotated Raviart--Thomas finite element space. Here we are interested in $Q_H$ instead of $P_H$ because we will use the fact $\mathcal{RM}\subset\ker(\varepsilon)$, see the proof of Lemma \ref{disapprox1} for details.

The next lemma is used to remove the Lam\'e coefficient $\lambda$ in error bounds. The proof can be found in Lemmas 3.1 and 3.2 of \cite{ArnoldDouglasGupta1984}. 
\begin{lemma}\label{rob}
There exists a constant $C_{\emph{rb}}$ depends only on $\mu$ and $\Omega$, such that
\begin{align*}
    \|\tau\|\leq C_{\emph{rb}}\big(\|\tau\|_{\mathbb{A}}+\|\divg\tau\|_{H^{-1}(\Omega)}\big)
\end{align*}
for all $\tau\in\Sigma$ with $\int_\Omega\tr\tau dx=0$.
\end{lemma}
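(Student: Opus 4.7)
The plan is to decompose $\tau=\tau^D+\tfrac12(\tr\tau)\delta$ into its deviatoric and spherical parts and to treat the two pieces separately. Using $\delta:\delta=2$ in two space dimensions, a direct algebraic computation from the definition of $\mathbb{A}$ gives
\begin{equation*}
\|\tau\|_{\mathbb{A}}^2=(\mathbb{A}\tau,\tau)=\frac{1}{2\mu}\|\tau^D\|^2+\frac{1}{4(\mu+\lambda)}\|\tr\tau\|^2.
\end{equation*}
The first summand yields $\|\tau^D\|\leq(2\mu)^{1/2}\|\tau\|_{\mathbb{A}}$ uniformly in $\lambda$, so only $\tr\tau$, which contributes $\tfrac12\|\tr\tau\|^2$ to $\|\tau\|^2$ through the orthogonal splitting $\|\tau\|^2=\|\tau^D\|^2+\tfrac12\|\tr\tau\|^2$, still needs a $\lambda$-free control.

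The key step is to bound $\|\tr\tau\|$ using the zero-mean hypothesis $\int_\Omega\tr\tau\,dx=0$. By the classical surjectivity of the divergence from $H^1_0(\Omega,\mathbb{R}^2)$ onto the zero-mean subspace of $L^2(\Omega)$ (equivalently, the Stokes inf--sup condition on $\Omega$), there exists $v\in H^1_0(\Omega,\mathbb{R}^2)$ with $\divg v=\tr\tau$ and $\|v\|_{H^1_0(\Omega)}\leq C_\Omega\|\tr\tau\|$, the constant depending only on $\Omega$. Because $\tau$ is symmetric and $v$ vanishes on $\partial\Omega$, integration by parts yields
\begin{equation*}
\int_\Omega\tau:\varepsilon(v)\,dx=-\int_\Omega\divg\tau\cdot v\,dx.
\end{equation*}
Using the splitting $\tau:\varepsilon(v)=\tau^D:\varepsilon(v)+\tfrac12(\tr\tau)(\divg v)=\tau^D:\varepsilon(v)+\tfrac12(\tr\tau)^2$, this rearranges into
\begin{equation*}
\tfrac12\|\tr\tau\|^2=-\int_\Omega\divg\tau\cdot v\,dx-\int_\Omega\tau^D:\varepsilon(v)\,dx.
\end{equation*}
I would then bound the two terms on the right by $\|\divg\tau\|_{H^{-1}(\Omega)}\|v\|_{H^1_0(\Omega)}$ (duality) and $\|\tau^D\|\,\|v\|_{H^1_0(\Omega)}$ (Cauchy--Schwarz), insert $\|v\|_{H^1_0(\Omega)}\leq C_\Omega\|\tr\tau\|$, and cancel one factor of $\|\tr\tau\|$ to obtain $\|\tr\tau\|\lesssim\|\divg\tau\|_{H^{-1}(\Omega)}+\|\tau^D\|$.

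Combining this with $\|\tau^D\|\leq(2\mu)^{1/2}\|\tau\|_{\mathbb{A}}$ through $\|\tau\|^2=\|\tau^D\|^2+\tfrac12\|\tr\tau\|^2$ then yields the claim, with a constant $C_{\text{rb}}$ depending only on $\mu$ and on the inf--sup constant for the Stokes problem on $\Omega$. The sole nontrivial ingredient, and the main potential obstacle, is precisely that inf--sup / divergence-surjectivity result on the polygonal Lipschitz domain $\Omega$; it is the source of the $\Omega$-dependence of $C_{\text{rb}}$ and is classical (Bogovskii's construction), while the remainder of the argument is elementary algebra together with Cauchy--Schwarz.
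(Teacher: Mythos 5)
Your proof is correct: the algebraic identity $\|\tau\|_{\mathbb{A}}^2=\frac{1}{2\mu}\|\tau^D\|^2+\frac{1}{4(\mu+\lambda)}\|\tr\tau\|^2$, the Bogovskii/inf--sup choice of $v\in H^1_0(\Omega,\mathbb{R}^2)$ with $\divg v=\tr\tau$, and the integration by parts all check out, and the resulting constant depends only on $\mu$ and $\Omega$ as required. The paper does not write out a proof but cites Lemmas 3.1--3.2 of Arnold--Douglas--Gupta, and your argument is essentially that classical proof, so it matches the intended route.
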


For $w\in H^1(T)$ and $\phi\in H^1(T,\mathbb{R}^2)$,
we have the integration-by-parts formula:
\begin{align}\label{ip}
    (\curl w,\phi)_T=\langle w,\phi\cdot t\rangle_{\partial T}-(w,\rot\phi)_T.
\end{align}

With the above preparations, we are able to prove the discrete reliability of $\eta_h.$
\begin{theorem}[discrete reliability]\label{disupper}
Let $\Th, \CT_H\in\mathbb{T}$ with $\CT_H\leq\Th$. There exists a constant $C_{\emph{drel}}$ depending only on $\mu, \Omega$, $\gamma_0$, such that
\begin{equation*}
    \|\sigma_H-\sigma_h\|^2_A\leq C_{\emph{drel}}\big(\eta^2_H(\sigma_H,\widetilde{\CR}_H)+\osc^2_H(f,\CR_H)\big).
\end{equation*}
\end{theorem}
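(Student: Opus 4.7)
The plan is to apply the discrete Helmholtz decomposition (Theorem \ref{disdecomp}) to split $\sigma_h-\sigma_H=J(w_h)+\varepsilon^h_{\mathbb{C}}(v_h)$ with $w_h\in W_h$, $v_h\in U_h$, and estimate the two $A$-orthogonal summands separately so that $\|\sigma_h-\sigma_H\|_A^2=\|J(w_h)\|_A^2+\|\varepsilon^h_{\mathbb{C}}(v_h)\|_A^2$. For the $\varepsilon^h_{\mathbb{C}}$-piece, the divergence equation \eqref{dismix2} together with $\divg\circ J=0$ and $\divg\Sigma_h\subseteq U_h$ gives $\divg\varepsilon^h_{\mathbb{C}}(v_h)=P_hf-P_Hf$, which vanishes on every unrefined $T\in\CT_H\setminus\CR_H$. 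Expanding $\|\varepsilon^h_{\mathbb{C}}(v_h)\|_A^2=-(v_h-P_Hv_h,P_hf-P_Hf)$ over $\CR_H$, using the elementary bound $\|v_h-P_Hv_h\|_T\le\|v_h-Q_Hv_h\|_T$ (since rigid motions are contained in $U_H|_T$), and applying Lemma \ref{disapprox} then yield $\|\varepsilon^h_{\mathbb{C}}(v_h)\|_A\lesssim\osc_H(f,\CR_H)$.

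For the $J(w_h)$-piece, the key identity I would derive is $\|J(w_h)\|_A^2=(\mathbb{A}(\sigma_h-\sigma_H),J(w_h-I_Hw_h))$: the $A$-orthogonality of Theorem \ref{disdecomp} turns $\|J(w_h)\|_A^2$ into a pairing with $\sigma_h-\sigma_H$, and the Galerkin orthogonality \eqref{error1} applied to the admissible test tensor $J(I_Hw_h)\in\Sigma_H$ (admissible because $I_Hw_h\in W_H$ by Lemma \ref{propIH} and $\divg J(I_Hw_h)=0$) removes the $I_H$-contribution. Setting $\phi=w_h-I_Hw_h\in W_h$, the consistency step I would establish next is $(\mathbb{A}(\sigma-\sigma_h),J\phi)=0$: equation \eqref{dismix1} with $\tau_h=J\phi$ gives $(\mathbb{A}\sigma_h,J\phi)=\langle(J\phi)n,g_D\rangle_{\Gamma_D}$, while a global integration by parts using $\divg J\phi=0$, $u=g_D$ on $\Gamma_D$, and $(J\phi)n|_{\Gamma_N}=0$ (which holds since $J\phi\in\Sigma_h$) produces the same expression for $(\mathbb{A}\sigma,J\phi)$. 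Therefore $(\mathbb{A}(\sigma_h-\sigma_H),J\phi)=(\mathbb{A}(\sigma-\sigma_H),J\phi)$, and the task reduces to bounding the right-hand side by the coarse estimator.

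To this end I would perform an elementwise integration by parts on $\CT_H$, using the matrix--vector form of \eqref{ip}, $(\tau,\curl v)_T=\langle\tau t,v\rangle_{\partial T}-(\rot\tau,v)_T$, applied twice. Because of the Saint-Venant identity $\rot\rot\varepsilon(u)=0$, only the element residual $-\rot\rot\mathbb{A}\sigma_H$ survives in the volume integrals. Decomposing $\curl\phi=\partial_n\phi\,t-\partial_t\phi\,n$ and integrating the $\partial_t\phi$ factor along each edge, the interior-edge contributions reduce to the tangential jump $\lr{t_e^\top\mathbb{A}\sigma_H t_e}$ paired with $\partial_n\phi$ and $\partial_{t_e}\lr{n_e^\top\mathbb{A}\sigma_H t_e}-\lr{\rot\mathbb{A}\sigma_H}\cdot t_e$ paired with $\phi$, with all vertex endpoint contributions vanishing by \eqref{localV}. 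The $\Gamma_N$-boundary terms drop out because \eqref{GammaN}--\eqref{dnGammaN} force $\phi=\partial_n\phi=\partial_t\phi=0$ there, and on $\Gamma_D$ I would match the continuous tractions to the data via $u=g_D$ together with the two-dimensional identities $t^\top\varepsilon(u)t=\partial_t(t\cdot g_D)$ and $\partial_t(n^\top\varepsilon(u)t)-\rot\varepsilon(u)\cdot t=n\cdot\partial_t^2g_D$ (the second from $\rot\varepsilon(u)=\tfrac12\nabla\rot u$ together with the tangent--normal identity $\partial_n(u\cdot t)-\rot u=\partial_t(u\cdot n)$), producing exactly the $g_D$-corrected $\Gamma_D$ terms appearing in $\eta_H$. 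Since $\phi\equiv0$ on $\CT_H\setminus\widetilde{\CR}_H$ by \eqref{localT}, all surviving integrals are supported on $\widetilde{\CR}_H$; Cauchy--Schwarz with the weights $h_T^2$, $h_e^{1/2}$, $h_e^{3/2}$ and the interpolation estimates \eqref{approxG} then give $|(\mathbb{A}(\sigma_h-\sigma_H),J\phi)|\lesssim\eta_H(\sigma_H,\widetilde{\CR}_H)\,|w_h|_{H^2(\Omega)}$.

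Finally I would use the pointwise identity $|Jw|^2=|\nabla^2w|^2$, so $|w_h|_{H^2(\Omega)}=\|Jw_h\|$, and invoke Lemma \ref{rob} applied to $\tau=Jw_h$ (for which $\divg\tau=0$ and $\tau n|_{\Gamma_N}=0$) to obtain the $\lambda$-robust bound $\|Jw_h\|\lesssim\|Jw_h\|_A$. The main obstacle I anticipate is the careful $\Gamma_D$ bookkeeping: organizing the edgewise integration by parts so that the $g_D$ corrections appear cleanly and the vertex endpoint terms cancel via $\phi|_{\mathcal{V}_H}=0$ is subtle, especially at corners of $\Omega$ and at the interface $\overline{\Gamma_D}\cap\overline{\Gamma_N}$; a secondary technicality is enforcing the zero-trace hypothesis $\int_\Omega\tr Jw_h=0$ required by Lemma \ref{rob}, which would be arranged by testing \eqref{dismix1} on each mesh with a constant symmetric tensor (feasible when $\Gamma_N=\emptyset$) or by a suitable normalization of $w_h$ modulo $\ker J$ in general. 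Combining the two summand bounds then yields $\|\sigma_H-\sigma_h\|_A^2\le C_{\text{drel}}\bigl(\eta_H^2(\sigma_H,\widetilde{\CR}_H)+\osc_H^2(f,\CR_H)\bigr)$, as claimed.
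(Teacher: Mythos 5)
Your plan reproduces the paper's argument in all structural respects: the discrete Helmholtz splitting $\sigma_h-\sigma_H=J(w_h)+\varepsilon^h_{\mathbb{C}}(v_h)$ from Theorem \ref{disdecomp}, the oscillation bound for the $\varepsilon^h_{\mathbb{C}}$-part via \eqref{dismix2} and Lemma \ref{disapprox}, the identity $\|Jw_h\|_{\mathbb{A}}^2=(\mathbb{A}(\sigma_h-\sigma_H),J(w_h-I_Hw_h))$ obtained from \eqref{error1} with $\tau_H=J(I_Hw_h)$, the localization to $\widetilde{\CR}_H$ through Lemma \ref{propIH}, and Lemma \ref{rob} for $\lambda$-robustness. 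The one step that would fail as written is the reduction of $(\mathbb{A}(\sigma-\sigma_H),J\phi)$ by \emph{elementwise} integration by parts applied to $\mathbb{A}\sigma=\varepsilon(u)$: for the mixed problem on a polygon, $u$ is in general only in $H^{1+s}(\Omega)$ with $s<1$, so the objects your computation needs on edges and elements --- $t_e^\top\varepsilon(u)t_e$, $\rot\varepsilon(u)\cdot t_e$, $\rot\rot\varepsilon(u)$, and the asserted jump cancellations across interior edges --- are not available in the $L^2(e)$/$L^2(T)$ sense, so the Saint-Venant identity and the $\Gamma_D$ trace identities cannot be invoked this way. The detour through $\sigma$ is also unnecessary: since you already use \eqref{dismix1} with $\tau_h=J\phi$, you can write $(\mathbb{A}(\sigma_h-\sigma_H),J\phi)=\langle(J\phi)n,g_D\rangle_{\Gamma_D}-(\mathbb{A}\sigma_H,J\phi)$, then integrate by parts only the piecewise polynomial $\mathbb{A}\sigma_H$ (twice, as in \eqref{part1}--\eqref{part2}) and the piecewise polynomial datum $g_D$ along $\Gamma_D$, with the endpoint terms controlled by \eqref{localV} and \eqref{GammaN}--\eqref{dnGammaN}; this is exactly the paper's route \eqref{total}--\eqref{mainpart} and produces the estimator terms on $\widetilde{\CR}_H$ rigorously.

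A second, smaller loose end is the hypothesis $\int_\Omega\tr(Jw_h)\,dx=0$ required by Lemma \ref{rob}. Testing the discrete equations (or \eqref{error1}) with the constant tensor $\delta$ only yields $\int_\Omega\tr(\sigma_h-\sigma_H)\,dx=0$; to conclude for $Jw_h$ you must also verify $\int_\Omega\tr\varepsilon^h_{\mathbb{C}}(v_h)\,dx=0$, which follows from the defining relation of $\varepsilon^h_{\mathbb{C}}$ tested with $\tau_h=\delta$, i.e., the paper's \eqref{tr1}--\eqref{trace}. Your fallback of ``normalizing $w_h$ modulo $\ker J$'' cannot work: such a normalization leaves $Jw_h$ unchanged and hence cannot affect $\int_\Omega\tr(Jw_h)\,dx$. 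The remaining ingredients of your proposal (the bound $\|v_h-P_Hv_h\|_T\le\|v_h-Q_Hv_h\|_T$, the restriction of the data term to $\CR_H$, and $|w_h|_{H^2(\Omega)}=\|Jw_h\|\lesssim\|Jw_h\|_{\mathbb{A}}$) are correct and coincide with the paper's proof.
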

\begin{proof}
Applying Theorem \ref{disdecomp} to $\sigma_H-\sigma_h\in\Sigma_H$ gives
\begin{align}\label{sigmatotal}
    \sigma_H-\sigma_h=J(w_h)+\varepsilon_{\mathbb{C}}^h(v_h)
\end{align}
for some $w_h\in W_h$ and $v_h\in U_h$. Taking $\tau_H=\delta$ in \eqref{error1} leads to
\begin{align}\label{tr1}
    \int_\Omega\tr(\sigma_H-\sigma_h)dx=0.
\end{align}
Direct calculation shows that
\begin{equation}\label{tr2}
    \int_\Omega\frac{1}{2(\mu+\lambda)}\tr\varepsilon_{\mathbb{C}}^h(v_h) dx=(\mathbb{A}\varepsilon_{\mathbb{C}}^h(v_h),\delta)=-(v_h,\divg\delta)=0.
\end{equation}
Then a combination of \eqref{sigmatotal}--\eqref{tr2} yields
\begin{equation}\label{trace}
    \int_\Omega\tr J(w_h)dx=0.
\end{equation}
Hence using Lemma \ref{rob}, $\divg\circ J=0$, \eqref{trace} and the $\mathbb{A}$-orthogonality between $Jw_h$ and $\varepsilon_{\mathbb{C}}^h(v_h)$, we obtain the following robust bound
\begin{equation}\label{bdJw}
    \|Jw_h\|\lesssim\|Jw_h\|_{\mathbb{A}}\leq\|\sigma_H-\sigma_h\|_{\mathbb{A}}.
\end{equation}
Let $E_h=w_h-I_Hw_h.$  Using \eqref{error1}, $\divg\circ J=0$, \eqref{localT}, we have
\begin{equation}\label{total}
    \begin{aligned}
    &(\mathbb{A}(\sigma_H-\sigma_h),Jw_h)=(\mathbb{A}(\sigma_H-\sigma_h),JE_h)\\
    &=(\mathbb{A}\sigma_H,JE_h)+\langle\partial_t\curl E_h,g_D\rangle_{\Gamma_D}\\
    &=\sum_{T\in\widetilde{\mathcal{R}}_H}(\mathbb{A}\sigma_H,JE_h)_T-\langle\curl E_h,\partial_tg_D\rangle_{\Gamma_D}.
\end{aligned}
\end{equation}
In the last equality, we integrate by parts on $\Gamma_D$ and use the fact $\curl E_h=0$ on $\partial\Gamma_D\subset\overline{\Gamma}_N$ (see \eqref{GammaN}, \eqref{dnGammaN}).
For each $T\in\widetilde{\mathcal{R}}_H$, using the formula \eqref{ip}, we have
\begin{equation}\label{part1}
    \begin{aligned}
    &(\mathbb{A}\sigma_H,JE_h)_T=\langle (\mathbb{A}\sigma_H)t,\curl E_h\rangle_{\partial T}-(\rot \mathbb{A}\sigma_H,\curl E_h)_T\\
    &\quad=\langle(\mathbb{A}\sigma_H)t,\curl E_h\rangle_{\partial T}-\langle(\rot \mathbb{A}\sigma_H)\cdot t,E_h\rangle_{\partial T}+(\rot\rot \mathbb{A}\sigma_H,E_h)_T.
    \end{aligned}
\end{equation}
Integrating by parts on each edge of $T$, we have
\begin{equation}\label{part2}
\begin{aligned}
\langle(\mathbb{A}\sigma_H)t,\curl E_h\rangle_{\partial T}&=\langle t^\top (\mathbb{A}\sigma_H)t,\partial_nE_h\rangle_{\partial T}-\langle n^\top(\mathbb{A}\sigma_H)t,\partial_tE_h\rangle_{\partial T}\\
&=\langle t^\top (\mathbb{A}\sigma_H)t,\partial_nE_h\rangle_{\partial T}+\langle n^\top\partial_t(\mathbb{A}\sigma_H)t,E_h\rangle_{\partial T}.
\end{aligned}
\end{equation}
In the last equality, we use the property \eqref{localV}, i.e., $E_h=0$ at each vertex of $T$. Similarly, for the boundary term in \eqref{total}, 
\begin{equation}\label{boundaryterm}
\begin{aligned}
        &-\langle\curl E_h,\partial_tg_D\rangle_{\Gamma_D}=-\langle\partial_nE_h,t\cdot\partial_tg_D\rangle_{\Gamma_D}+\langle\partial_tE_h,n\cdot\partial_tg_D\rangle_{\Gamma_D}\\
        &\qquad=-\langle\partial_nE_h,t\cdot\partial_tg_D\rangle_{\Gamma_D}-\langle E_h,n\cdot\partial^2_tg_D\rangle_{\Gamma_D}.
\end{aligned}
\end{equation}

Let ${\CE}^o_H(\widetilde{\CR}_H)=\{e\in\mathcal{E}^o_H: e\subset\partial T\text{ for some }T\in\widetilde{\CR}_H\}$ and ${\CE}^D_H(\widetilde{\CR}_H)=\{e\in\mathcal{E}_H: e\subset\partial T\text{ for some }T\in\widetilde{\CR}_H, e\subset\Gamma_D\}$. Note that $E_h$ and $\partial_nE_h$ are continuous over each edge in $\CT_H.$ Combining \eqref{part1}--\eqref{boundaryterm} and using \eqref{localT}, \eqref{GammaN}, \eqref{dnGammaN}, we obtain
\begin{equation}\label{mainpart}
    \begin{aligned}
    &(\mathbb{A}(\sigma_H-\sigma_h),Jw_h)=\sum_{T\in\widetilde{\CR}_H}\left\{\langle t^\top (\mathbb{A}\sigma_H)t, \partial_nE_h\rangle_{\partial T}\right.\\
    &\qquad+\left.(\rot\rot \mathbb{A}\sigma_H,E_h)_T+\langle n^\top\partial_t(\mathbb{A}\sigma_H)t-(\rot \mathbb{A}\sigma_H)\cdot t,E_h\rangle_{\partial T}\right\}\\
    &\qquad\quad-\sum_{e\in\CE^D_H(\widetilde{\CR}_H)}\left\{\langle\partial_{n_e}E_h,t_e\cdot\partial_{t_e}g_D\rangle_e+\langle E_h,n_e\cdot\partial^2_{t_e}g_D\rangle_e\right\}\\
    &=\sum_{e\in\CE^o_H(\widetilde{\CR}_H)}\left\{\langle t_e^\top\lr{\mathbb{A}\sigma_H}t_e,\partial_{n_e}E_h\rangle_e+\langle n^\top_e\partial_{t_e}\lr{\mathbb{A}\sigma_H}t_e-\lr{\rot \mathbb{A}\sigma_H}\cdot t_e,E_h\rangle_e\right\}\\
    &\qquad+\sum_{T\in\widetilde{\CR}_H}(\rot\rot \mathbb{A}\sigma_H,E_h)_T+\sum_{e\in\CE^D_H(\widetilde{\CR}_H)}\big\{\langle t_e^\top(\mathbb{A}\sigma_Ht_e-\partial_{t_e}g_D),\partial_{n_e}E_h\rangle_e\\
    &\qquad\quad+\langle n^\top_e\partial_{t_e}(\mathbb{A}\sigma_H)t_e-(\rot \mathbb{A}\sigma_H)\cdot t_e-n_e^\top\partial_{t_e}^2g_D,E_h\rangle_e\big\}.
    \end{aligned}
\end{equation}
Using the expression \eqref{mainpart} and the Cauchy--Schwarz inequality, we have
\begin{equation}\label{Jpart}
    \begin{aligned}
    &(\mathbb{A}(\sigma_H-\sigma_h),Jw_h)\\
    &\quad\lesssim\eta_H(\sigma_H,\widetilde{\CR}_H)(\sum_{T\in\CT_H}h_T^{-4}\|E_h\|_T^2+h_T^{-1}\|\partial_nE_h\|_{\partial T}^2+h_T^{-3}\|E_h\|_{\partial T}^2)^\frac{1}{2}.
    \end{aligned}
\end{equation}
It then follows from \eqref{Jpart}, \eqref{approxG} and \eqref{bdJw} that
\begin{equation}\label{Jpartfinal}
    \begin{aligned}
    (\mathbb{A}(\sigma_H-\sigma_h),Jw_h)&\lesssim\eta_H(\sigma_H,\widetilde{\mathcal{R}}_H)|w_h|_{H^2(\Omega)}\\
    &\lesssim\eta_H(\sigma_H,\widetilde{\mathcal{R}}_H)\|\sigma_h-\sigma_H\|_{\mathbb{A}}.
    \end{aligned}
\end{equation}
On the other hand, \eqref{dismix2} implies
\begin{equation}\label{interEA}
\begin{aligned}
&(\mathbb{A}(\sigma_H-\sigma_h),\varepsilon_{\mathbb{C}}^h(v_h))=-(\divg(\sigma_H-\sigma_h),v_h)=(P_hf-P_Hf,v_h)\\
&\quad=(f-P_Hf,v_h-Q_Hv_h)=\sum_{T\in\mathcal{R}_H}(f-P_Hf,v_h-Q_Hv_h)_T.
\end{aligned}
\end{equation}
Using \eqref{interEA}, Lemma \ref{disapprox}, and $\|\varepsilon_{\mathbb{C}}^h(v_h)\|_{\mathbb{A}}\leq\|\sigma_H-\sigma_h\|_{\mathbb{A}},$  we obtain
\begin{equation}\label{epart}
\begin{aligned}
&(\mathbb{A}(\sigma_H-\sigma_h),\varepsilon_{\mathbb{C}}^h(v_h))\leq\osc_H(f,\mathcal{R}_H)\big(\sum_{T\in\CR_H}h_T^{-2}\|v_h-Q_Hv_h\|_T^2\big)^\frac{1}{2}\\
    &\qquad\lesssim\osc_H(f,\mathcal{R}_H)\|\varepsilon_{\mathbb{C}}^h(v_h)\|_{\mathbb{A}}\leq\osc_H(f,\mathcal{R}_H)\|\sigma_H-\sigma_h\|_{\mathbb{A}}.
\end{aligned}
\end{equation}
Finally, a combination of \eqref{Jpartfinal} and \eqref{epart} completes the proof.
\end{proof}

Let $\CT_h$ be a uniform refinement of $\CT_H$ and let the maximum mesh size of $\Th$ go to $0$ in Theorem \ref{disupper}. In this case, $\widetilde{\CR}_H=\CR_H=\CT_H$ and $\sigma_h\rightarrow\sigma, u_h\rightarrow u$ in $\Sigma\times U$. Therefore, we obtain the continuous upper bound
\begin{align}\label{rel}
\|\sigma-\sigma_H\|_{\mathbb{A}}^2&\leq{C}_{\text{rel}}\big({\eta}^2_H(\sigma_H)+\osc^2_H(f)\big),
\end{align}
where ${C}_{\text{rel}}\in(0,C_{\text{drel}}]$ is a constant depending only on $\mu, \Omega, \gamma_0$. 

The quasi-orthogonality on the variable $\sigma$ follows with a similar argument as in the proof of Theorem \ref{disupper}.
\begin{theorem}[quasi-orthogonality]\label{qosigma}
Let $\Th, \CT_H\in\mathbb{T}$ with $\CT_H\leq\Th.$ For any $\nu\in(0,1)$, it holds that
\begin{align*}
    (1-\nu)\|\sigma-\sigma_h\|^2_\mathbb{A}\leq\|\sigma-\sigma_H\|^2_\mathbb{A}-\|\sigma_h-\sigma_H\|^2_\mathbb{A}+C_{\nu}\osc^2_H(f,\mathcal{R}_H),
\end{align*}
where $C_\nu=\nu^{-1}C_\sigma$ and $C_\sigma$ is a constant depending only on $\mu, \Omega, \gamma_0$.
\end{theorem}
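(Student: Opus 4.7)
The plan is to start from the algebraic identity
\begin{equation*}
\|\sigma-\sigma_H\|^2_\mathbb{A}=\|\sigma-\sigma_h\|^2_\mathbb{A}+\|\sigma_h-\sigma_H\|^2_\mathbb{A}+2(\mathbb{A}(\sigma-\sigma_h),\sigma_h-\sigma_H)
\end{equation*}
and to reduce the theorem to the \emph{cross-term estimate}
\begin{equation*}
|(\mathbb{A}(\sigma-\sigma_h),\sigma_h-\sigma_H)|\lesssim\osc_H(f,\CR_H)\,\|\sigma-\sigma_h\|_\mathbb{A}.
\end{equation*}
Once this is available, Young's inequality $2|ab|\le\nu a^2+\nu^{-1}b^2$ absorbs the factor $\|\sigma-\sigma_h\|_\mathbb{A}$ from the cross term into the left side of the target inequality, producing the stated estimate with $C_\nu=\nu^{-1}C_\sigma$.

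For the cross-term estimate, I subtract \eqref{dismix1} from \eqref{mix1} to recover the continuous--discrete Galerkin orthogonality and test it with $\tau_h=\sigma_h-\sigma_H\in\Sigma_h$. Using $\divg\sigma=f$, $\divg\sigma_h=P_hf$, and $\divg\sigma_H=P_Hf$, the cross term reduces to $-(P_hf-P_Hf,u-u_h)$. Two $L^2$-projection facts now intervene: since $P_hf-P_Hf\in U_h$, selfadjointness of $P_h$ replaces $u-u_h$ by $P_hu-u_h$; and since $P_hf-P_Hf$ is $L^2$-orthogonal to $U_H\supseteq\mathcal{RM}_H$, we may additionally subtract $Q_H(P_hu-u_h)$ from the second argument. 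Because $P_h$ and $P_H$ agree on each unrefined element of $\CT_H$, the difference $P_hf-P_Hf$ is supported on $\CR_H$, and a localized Cauchy--Schwarz then gives
\begin{equation*}
|(P_hf-P_Hf,u-u_h)|\le\osc_H(f,\CR_H)\,\Bigl(\sum_{T\in\CR_H}h_T^{-2}\|P_hu-u_h-Q_H(P_hu-u_h)\|_T^2\Bigr)^{1/2}.
\end{equation*}
Applying Lemma \ref{disapprox} with $v_h=P_hu-u_h\in U_h$ replaces the last factor by $\|\varepsilon^h_\mathbb{C}(P_hu-u_h)\|_\mathbb{A}$.

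The decisive step is bounding this last quantity by $\|\sigma-\sigma_h\|_\mathbb{A}$. Setting $\tau^\star_h:=\varepsilon^h_\mathbb{C}(P_hu-u_h)\in\Sigma_h$, I combine the defining adjoint identity for $\varepsilon^h_\mathbb{C}$ from Theorem \ref{disdecomp}, the fact $\divg\tau^\star_h\in U_h$ (which gives $(P_hu-u_h,\divg\tau^\star_h)=(u-u_h,\divg\tau^\star_h)$), and the Galerkin orthogonality tested against $\tau^\star_h\in\Sigma_h$, to telescope
\begin{equation*}
\|\tau^\star_h\|^2_\mathbb{A}=-(P_hu-u_h,\divg\tau^\star_h)=-(u-u_h,\divg\tau^\star_h)=(\mathbb{A}(\sigma-\sigma_h),\tau^\star_h).
\end{equation*}
Cauchy--Schwarz then delivers $\|\tau^\star_h\|_\mathbb{A}\le\|\sigma-\sigma_h\|_\mathbb{A}$. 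Feeding this back into the chain above produces the cross-term estimate, and the initial Young argument closes the proof.

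The main obstacle is precisely this last self-referential bound: the test field $\varepsilon^h_\mathbb{C}(P_hu-u_h)$ is built from the \emph{unknown} discrete velocity error, and there is no direct a priori control of its weighted norm. The resolution rests entirely on the fortunate circumstance that $\varepsilon^h_\mathbb{C}(P_hu-u_h)$ lies in $\Sigma_h$, so that both the adjoint defining identity for $\varepsilon^h_\mathbb{C}$ and the mixed-method Galerkin orthogonality for $\sigma-\sigma_h$ can be invoked on the same test function, collapsing $\|\tau^\star_h\|^2_\mathbb{A}$ into an inner product against $\sigma-\sigma_h$. This bypasses any use of the inf-sup constant and is what keeps the argument, and the resulting constant $C_\sigma$, robust in the Lamé parameter $\lambda$.
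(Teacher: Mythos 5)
Your proof is correct, but it is not the route the paper takes. The paper first splits $\sigma_H-\sigma_h=J(w_h)+\varepsilon^h_{\mathbb{C}}(v_h)$ by the discrete Helmholtz decomposition (Theorem \ref{disdecomp}), annihilates the divergence-free part $J(w_h)$ by testing \eqref{mix1} and \eqref{dismix1} with it, and then bounds the remaining component by data oscillation alone, $\|\varepsilon^h_{\mathbb{C}}(v_h)\|_{\mathbb{A}}\lesssim\osc_H(f,\CR_H)$, using the adjoint identity, $\divg(\sigma_H-\sigma_h)=P_Hf-P_hf$, and Lemma \ref{disapprox}; Cauchy--Schwarz and Young's inequality then finish. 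You avoid the Helmholtz decomposition altogether: testing the continuous--discrete Galerkin orthogonality with $\sigma_h-\sigma_H$ turns the cross term into $-(P_hf-P_Hf,u-u_h)$, and you route the estimate through the auxiliary field $\varepsilon^h_{\mathbb{C}}(P_hu-u_h)$, which must then be bounded by $\|\sigma-\sigma_h\|_{\mathbb{A}}$ via your telescoping duality step; that step is legitimate because $\varepsilon^h_{\mathbb{C}}(P_hu-u_h)\in\Sigma_h$ and $\divg\Sigma_h\subseteq U_h$, so both the adjoint identity and the Galerkin orthogonality may be tested with it. The two arguments share the decisive ingredients---$\divg\sigma_h=P_hf$, $\divg\sigma_H=P_Hf$, the localization of $P_hf-P_Hf$ to $\CR_H$ together with its orthogonality to $\mathcal{RM}_H\subset U_H$, and Lemma \ref{disapprox}---so both yield a constant $C_\sigma$ with the same $\lambda$-robust dependence. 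What the paper's version buys is independence from the continuous displacement $u$ and the cleaner intermediate fact that the non-solenoidal part of $\sigma_H-\sigma_h$ is controlled by $\osc_H(f,\CR_H)$ alone (cf.~\eqref{qopart2}); what yours buys is that Theorem \ref{disdecomp}, hence the exactness of \eqref{exactsequence} and the space $W_h$, is not needed for this particular theorem, only the adjoint operator $\varepsilon^h_{\mathbb{C}}$ and Lemma \ref{disapprox}. One small overstatement: your closing claim that the argument ``bypasses any use of the inf-sup constant'' holds only within the quasi-orthogonality proof itself, since Lemma \ref{disapprox}, which you invoke, is proved through the discrete inf-sup condition \eqref{infsup1}, exactly as in the paper.
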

\begin{proof}
Combining \eqref{sigmatotal} and \eqref{mix1}, \eqref{dismix1} yields
\begin{equation}\label{qopart1}
    (\mathbb{A}(\sigma-\sigma_h),\sigma_H-\sigma_h)=(\mathbb{A}(\sigma-\sigma_h),\varepsilon_{\mathbb{C}}^h(v_h))\leq\|\sigma-\sigma_h\|_\mathbb{A}\|\varepsilon_{\mathbb{C}}^h(v_h)\|_\mathbb{A}.
\end{equation}
Following the same analysis in \eqref{interEA}, we have
\begin{equation}\label{qopart2}
    \begin{aligned}
    &\|\varepsilon_{\mathbb{C}}^h(v_h)\|_\mathbb{A}^2=-(\divg\varepsilon_\mathbb{C}^h(v_h),v_h)=-(\divg(\sigma_H-\sigma_h),v_h)\\
    &\quad=\sum_{T\in\mathcal{R}_H}(f-P_Hf,v_h-Q_Hv_h)_T\leq C_\sigma^\frac{1}{2}\osc_H(f,\mathcal{R}_H)\|\varepsilon_\mathbb{C}^h(v_h)\|_\mathbb{A}.
\end{aligned}
\end{equation}
A combination of \eqref{qopart1} and \eqref{qopart2} shows that
\begin{align*}
    &(\mathbb{A}(\sigma-\sigma_h),\sigma_H-\sigma_h)\leq C_\sigma^\frac{1}{2}\|\sigma-\sigma_h\|_\mathbb{A}\osc_H(f,\mathcal{R}_H)\\
    &\qquad\leq\frac{\nu}{2}\|\sigma-\sigma_h\|^2_\mathbb{A}+\frac{\nu^{-1}}{2}C_\sigma\osc^2_H(f,\mathcal{R}_H),
\end{align*}
where $0<\nu<1$. Therefore
\begin{align*}
    &\|\sigma-\sigma_h\|^2_\mathbb{A}=\|\sigma-\sigma_H\|^2_\mathbb{A}-\|\sigma_h-\sigma_H\|^2_\mathbb{A}+2(\mathbb{A}(\sigma-\sigma_h),\sigma_H-\sigma_h)\\
    &\quad\leq\|\sigma-\sigma_H\|^2_\mathbb{A}-\|\sigma_h-\sigma_H\|^2_\mathbb{A}+\nu\|\sigma-\sigma_h\|^2_\mathbb{A}+\nu^{-1}C_\sigma\osc^2_H(f,\mathcal{R}_H).
\end{align*}
The proof is complete.
\end{proof}

\section{Quasi-optimality}\label{secqo}
Define
\begin{align*}
    \bar{\eta}_h(\sigma_h,T)&=\big(\eta^2_h(\sigma_h,T)+\osc^2_h(f,T)\big)^\frac{1}{2},\\
    \bar{\eta}_h(\sigma_h,\mathcal{M})&=\big(\sum_{T\in\mathcal{M}}\bar{\eta}^2_h(\sigma_h,T)\big)^\frac{1}{2},\quad\mathcal{M}\subseteq\Th.
\end{align*}
Our adaptive algorithm is based on the classical feedback loop \begin{equation*}
\begin{CD}
        \textsf{SOLVE}@>>>\textsf{ESTIMATE} @>>>\textsf{MARK}@>>>\textsf{REFINE}.
\end{CD}
\end{equation*}
In the procedure \textsf{REFINE}, we use the newest vertex bisection \cite{Mitchell1990,Ban1991,Stevenson2008} to ensure the shape regularity of $\mathbb{T}$.

\begin{algorithm}\label{AMFEM}
Input the initial mesh $\mathcal{T}_{h_0}=\mathcal{T}_0$ and $\theta\in(0,1)$. Set $\ell=0$. 
\begin{itemize}
\item[]\textsf{SOLVE}: Solve \eqref{dismix} on $\mathcal{T}_{h_\ell}$ to obtain the finite element solution $(\sigma_{h_\ell},u_{h_\ell})$. 
\item[]\textsf{ESTIMATE}: Calculate error indicators $\{\bar{\eta}_{h_\ell}(\sigma_{h_\ell},T)\}_{T\in\CT_{h_\ell}}$.
\item[]\textsf{MARK}: Select a subset $\mathcal{M}_{\ell}\subset\mathcal{T}_{h_\ell}$ with minimal cardinality such that  $$\bar{\eta}_{h_\ell}(\sigma_{h_\ell},\mathcal{M}_{\ell})\geq\theta\bar{\eta}_{h_\ell}.$$
\item[]\textsf{REFINE}: Refine all elements in $\mathcal{M}_{\ell}$ and minimal number of neighboring elements to remove hanging nodes. The resulting conforming mesh is $\mathcal{T}_{h_{\ell+1}}$. Set $\ell=\ell+1$. Go to \textsf{SOLVE}.
\end{itemize}
\end{algorithm}

In the procedure \textsf{ESTIMATE}, the actual estimator is $\big({\eta}^2_{h_\ell}+{\osc}^2_{h_\ell}\big)^\frac{1}{2}$ instead of $\eta_{h_\ell}$. Due to this strategy, an extra marking step  for data oscillation can be avoided, see, e.g., \cite{HuangXu2012,HLMS2019}. Since the data oscillation $\osc_{h_\ell}$ is completely local, its behavior can be easily described by the following lemma, see, e.g., Lemma 5.2 in \cite{HLMS2019}.
\begin{lemma}\label{osc}
For $\ell\geq0,$ let $\mathcal{R}_\ell=\CT_{h_\ell}\backslash\CT_{h_{\ell+1}}$ denote the collection of refinement elements from $\CT_{h_\ell}$ to $\CT_{h_{\ell+1}}$. It holds that 
\begin{align*}
    &\osc_{h_{\ell+1}}^2\leq\osc^2_{h_\ell}-\frac{1}{2}\osc^2_{h_\ell}(f,\mathcal{R}_\ell).
\end{align*}
\end{lemma}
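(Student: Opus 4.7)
The plan is to split the sum defining $\osc^2_{h_{\ell+1}}$ over $\CT_{h_{\ell+1}}$ into the elements that were untouched (those in $\CT_{h_\ell}\cap\CT_{h_{\ell+1}}$) and the children in $\CT_{h_{\ell+1}}$ of elements in $\mathcal{R}_\ell$. On the untouched part I expect no change, and on the refined part I need to show a strict reduction by a factor of $1/2$.

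For an element $T\in\CT_{h_\ell}\cap\CT_{h_{\ell+1}}$, the local polynomial space $\mathcal{P}_{r+2}(T,\mathbb{R}^2)$ used to define the $L^2$-projection $P_{h_\ell}f|_T$ and $P_{h_{\ell+1}}f|_T$ is the same, and $h_T$ is unchanged, so $\osc^2_{h_{\ell+1}}(f,T)=\osc^2_{h_\ell}(f,T)$. For $T\in\mathcal{R}_\ell$, I would use the two standard facts for newest-vertex bisection: each child $T'\subset T$ that belongs to $\CT_{h_{\ell+1}}$ satisfies $h_{T'}^2=|T'|\leq|T|/2=h_T^2/2$, and since $U_{h_\ell}\subset U_{h_{\ell+1}}$ the best-approximation property of the $L^2$ projection gives $\|f-P_{h_{\ell+1}}f\|_{T'}\leq\|f-P_{h_\ell}f\|_{T'}$. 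Combining these on a single refined element yields
\begin{equation*}
\sum_{T'\subset T,\,T'\in\CT_{h_{\ell+1}}}\osc^2_{h_{\ell+1}}(f,T')\leq\frac{h_T^2}{2}\sum_{T'\subset T}\|f-P_{h_\ell}f\|_{T'}^2=\frac{1}{2}\osc^2_{h_\ell}(f,T).
\end{equation*}

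Summing the two contributions gives
\begin{equation*}
\osc^2_{h_{\ell+1}}\leq\sum_{T\in\CT_{h_\ell}\setminus\mathcal{R}_\ell}\osc^2_{h_\ell}(f,T)+\frac{1}{2}\sum_{T\in\mathcal{R}_\ell}\osc^2_{h_\ell}(f,T)=\osc^2_{h_\ell}-\frac{1}{2}\osc^2_{h_\ell}(f,\mathcal{R}_\ell),
\end{equation*}
which is the claim. There is no real obstacle here; the only point requiring slight care is ensuring that the bisection convention used in \textsf{REFINE} really guarantees $|T'|\leq |T|/2$ for every child $T'\in\CT_{h_{\ell+1}}$ of a marked $T$, but this is exactly a property of newest-vertex bisection (each bisection halves the area), so an element that is refined at least once in the $\ell\to\ell+1$ step automatically satisfies the halving inequality.
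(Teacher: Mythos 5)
Your proof is correct and is essentially the standard argument the paper relies on (it cites Lemma 5.2 of \cite{HLMS2019} rather than reproducing it): split over unrefined elements and children of elements in $\mathcal{R}_\ell$, use locality and best approximation of the broken $L^2$ projection, and use that each bisection halves the area so that, with the paper's convention $h_T=|T|^{1/2}$, one gets $h_{T'}^2\leq h_T^2/2$. No gaps; your remark about the area-halving property of newest vertex bisection is exactly the point that makes the factor $\frac{1}{2}$ come out cleanly with this definition of $h_T$.
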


Once the theoretical results in Section \ref{secsigma} are available, the convergence and complexity analysis of Algorithm \ref{AMFEM} follows from classical and systematic arguments, see, e.g., \cite{CKNS2008,Stevenson2007} and \cite{CarstensenFeischlPagePraetorius2014} for axioms of adaptivity. To be self-contained, we still briefly outlined the rest of adaptivity analysis.

The estimator reduction is a standard ingredient in the convergence analysis of AFEMs, see \cite{CKNS2008}. Since $\bar{\eta}_h$ involves data oscillation, we refer to Lemma 5.1 in \cite{HLMS2019} for a detailed proof. The only new ingredient in the proof of Lemma \ref{reduction} is the robust inequality $\|\mathbb{A}\tau\|\lesssim\|\tau\|_\mathbb{A}.$
\begin{lemma}\label{reduction}
There exists a constant $\gamma\in(0,1)$ and $C_{\emph{re}}>0$ depending only on $\mu, \Omega, \gamma_0,\theta$ such that
\begin{align*}
    \bar{\eta}_{h_{\ell+1}}^2\leq\gamma\bar{\eta}_{h_\ell}^2+C_{\emph{re}}\|\sigma_{h_\ell}-\sigma_{h_{\ell+1}}\|_\mathbb{A}^2.
\end{align*}
\end{lemma}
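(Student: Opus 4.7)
The plan is to follow the standard estimator reduction template of \cite{CKNS2008,HLMS2019}, adapted so that the Lipschitz perturbation step is carried out in the $\mathbb{A}$-norm rather than the plain $L^2$-norm. As remarked in the statement, the only elasticity-specific ingredient is the robust bound $\|\mathbb{A}\tau\|\lesssim\|\tau\|_\mathbb{A}$ (immediate from $2\mu\|\mathbb{A}\tau\|^2 \leq (\mathbb{C}\mathbb{A}\tau,\mathbb{A}\tau)=\|\tau\|^2_\mathbb{A}$ and hence uniform in $\lambda$), which is precisely what permits this substitution.

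First I would decompose $\bar{\eta}^2_{h_{\ell+1}} = \eta^2_{h_{\ell+1}} + \osc^2_{h_{\ell+1}}$. For each $T'\in\CT_{h_{\ell+1}}$ Young's inequality with parameter $\delta > 0$ gives
\begin{equation*}
\eta^2_{h_{\ell+1}}(\sigma_{h_{\ell+1}},T') \leq (1+\delta)\,\eta^2_{h_{\ell+1}}(\sigma_{h_\ell},T') + (1+\delta^{-1})\,\eta^{\circ}_{h_{\ell+1}}(\sigma_{h_{\ell+1}}-\sigma_{h_\ell},T')^2,
\end{equation*}
where $\eta^\circ$ denotes the estimator with the datum $g_D$ deleted, so that $\eta^\circ_{h_{\ell+1}}(\cdot,T')$ is linear in its stress argument. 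Standard scaled trace and inverse inequalities on $T'$ and its edges reduce each term of $\eta^\circ_{h_{\ell+1}}(\tau,T')^2$ to a multiple of $\|\mathbb{A}\tau\|^2$ on the element patch $\omega_{T'}$; the robust bound then controls this by $\|\tau\|^2_{\mathbb{A},\omega_{T'}}$, and summation over $T'$ yields the global estimate $\sum_{T'} \eta^{\circ}_{h_{\ell+1}}(\sigma_{h_{\ell+1}}-\sigma_{h_\ell},T')^2 \lesssim \|\sigma_{h_{\ell+1}}-\sigma_{h_\ell}\|_\mathbb{A}^2$.

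Next I would bound $\sum_{T'}\eta^2_{h_{\ell+1}}(\sigma_{h_\ell},T')$ by splitting $\CT_{h_{\ell+1}}$ into unchanged elements $\CT_{h_\ell}\cap\CT_{h_{\ell+1}}$ and proper descendants of $\CR_\ell$. On an unchanged $T'$, conformity of newest-vertex bisection forces every edge of $T'$ to be uncut, and since children of a refined neighbour carry the same polynomial restriction of $\sigma_{h_\ell}$, every jump of $\mathbb{A}\sigma_{h_\ell}$ coincides with the corresponding $\CT_{h_\ell}$-jump. On a proper descendant $T'\subsetneq T\in\CR_\ell$ bisection in 2D gives $h_{T'}^2 \leq h_T^2/2$; the newly created interior edges contribute nothing since $\sigma_{h_\ell}$ is a single polynomial across them; and every remaining $h$-weight is either reduced by bisection of that edge or inherited unchanged on at most one child. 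Routine bookkeeping (as in Lemma 5.1 of \cite{HLMS2019}) then produces some $q\in(0,1)$ depending only on $\gamma_0, r$ with
\begin{equation*}
\sum_{T'\in\CT_{h_{\ell+1}}}\eta^2_{h_{\ell+1}}(\sigma_{h_\ell},T') \leq \eta^2_{h_\ell}(\sigma_{h_\ell}) - (1-q)\,\eta^2_{h_\ell}(\sigma_{h_\ell},\CR_\ell).
\end{equation*}
Combining this with Lemma \ref{osc} and absorbing the $(1+\delta)$ factor into $\bar{\eta}^2_{h_\ell}$ yields
\begin{equation*}
\bar{\eta}^2_{h_{\ell+1}} \leq (1+\delta)\bar{\eta}^2_{h_\ell} - c_0\,\bar{\eta}^2_{h_\ell}(\sigma_{h_\ell},\CR_\ell) + C\,(1+\delta^{-1})\|\sigma_{h_{\ell+1}}-\sigma_{h_\ell}\|_\mathbb{A}^2,
\end{equation*}
with $c_0 = \min\{1-q,\,1/2\} > 0$.

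Finally, since $\mathcal{M}_\ell\subseteq\CR_\ell$ the D\"orfler marking in Algorithm \ref{AMFEM} gives $\bar{\eta}^2_{h_\ell}(\sigma_{h_\ell},\CR_\ell) \geq \theta^2\bar{\eta}^2_{h_\ell}$, and choosing $\delta\in(0,c_0\theta^2)$ produces a contraction $\gamma := (1+\delta)-c_0\theta^2 \in (0,1)$ and $C_{\text{re}} := C(1+\delta^{-1})$. The step I expect to be most delicate is the Lipschitz bound: the third edge term $h_e^3\|n_e^\top\partial_{t_e}\lr{\mathbb{A}\sigma_h}t_e - \lr{\rot \mathbb{A}\sigma_h}\cdot t_e\|^2_e$ must be controlled through combined trace and inverse inequalities on both sides of $e$, and the resulting $L^2$ norm absorbed into $\|\cdot\|_\mathbb{A}$ via the $\lambda$-independent inequality $\|\mathbb{A}\tau\|\lesssim\|\tau\|_\mathbb{A}$; any $\lambda$-dependent constant slipping in here would destroy the locking-free contraction that the whole paper is designed to preserve.
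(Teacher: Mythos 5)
Your route (triangle/Young perturbation on the new mesh, trace and inverse inequalities for the perturbation term, the $\lambda$-uniform bound $2\mu\|\mathbb{A}\tau\|^2\le\|\tau\|_{\mathbb{A}}^2$, Lemma \ref{osc} for the oscillation, and absorption via D\"orfler marking with $\mathcal{M}_\ell\subseteq\mathcal{R}_\ell$ and $\delta<c_0\theta^2$) is exactly the standard argument the paper delegates to \cite{CKNS2008,HLMS2019}, and the step you flag as delicate---the third edge term and $\lambda$-robustness---is handled correctly and is indeed the paper's only declared new ingredient. The genuine gap is in the reduction bookkeeping. The estimator here weights edge terms by $h_e$, the diameter of the edge, and a single newest-vertex bisection of a marked triangle $T$ cuts only its refinement edge: the other two edges are inherited intact by one child each, with the same jump of $\mathbb{A}\sigma_{h_\ell}$ and the same weight $h_e$. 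Your own phrase ``inherited unchanged on at most one child'' concedes this, but then the asserted inequality $\sum_{T'}\eta^2_{h_{\ell+1}}(\sigma_{h_\ell},T')\le\eta^2_{h_\ell}(\sigma_{h_\ell})-(1-q)\,\eta^2_{h_\ell}(\sigma_{h_\ell},\mathcal{R}_\ell)$ does not follow: if on a refined element $\rot\rot\mathbb{A}\sigma_{h_\ell}$ and the jumps across its refinement edge vanish while the jumps across the two uncut edges do not, the children reproduce the old local estimator exactly, so no uniform factor $q<1$ over all of $\mathcal{R}_\ell$ is available---and it is precisely this uniform factor that you pair with D\"orfler marking to obtain $\gamma<1$. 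Note also that the comparability $h_e\simeq h_T$ cannot rescue the step, since the equivalence constants destroy the strict reduction factor.

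The standard repairs are: (i) carry out the reduction for the estimator in which all edge terms of $T$ are weighted by powers of $h_T$ (the formulation actually used in \cite{CKNS2008}); then every term attached to a child of a refined element carries the reduced weight $h_{T'}\le 2^{-1/2}h_T$ and your bookkeeping becomes literally correct, at the price of working with (or switching the algorithm to) that equivalent estimator; or (ii) strengthen \textsf{REFINE} so that every edge of a marked element is bisected---two levels of newest-vertex bisection per marked triangle suffice, since the children's refinement edges are exactly the two remaining original edges of $T$. One of these choices must be made explicit; with the estimator exactly as defined and one bisection per marked element, the intermediate inequality you rely on is false in general. The remainder of your argument (the Lipschitz bound via $\|\mathbb{A}\tau\|\lesssim\|\tau\|_{\mathbb{A}}$, the treatment of $g_D$ as an affine offset, the constants' admissible dependence, and the final choice of $\delta$ and $\gamma$) is sound.
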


For convenience, let
\begin{align*}
    e_{\ell}=\|\sigma-\sigma_{h_\ell}\|_\mathbb{A},\quad E_{\ell}=\|\sigma_{h_\ell}-\sigma_{h_{\ell+1}}\|_\mathbb{A}.
\end{align*}
The next theorem gives the contraction property of Algorithm \ref{AMFEM}.
\begin{theorem}[contraction]
There exists constants $\nu, \alpha\in(0,1)$ depending only on $\theta, \mu, \Omega, \gamma_0$ such that 
\begin{align*}
    (1-\nu)e^2_{\ell+1}+2C_\nu\osc^2_{h_{\ell+1}}+C^{-1}_{\emph{re}}\bar{\eta}_{h_{\ell+1}}^2\leq\alpha\big((1-\nu)e^2_{\ell}+2C_\nu\osc^2_{h_\ell}+C^{-1}_{\emph{re}}\bar{\eta}_{h_{\ell}}^2\big).
\end{align*}
\end{theorem}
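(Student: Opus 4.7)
The plan is to combine the three complementary reduction estimates of the previous sections with the global reliability bound \eqref{rel}. Writing
\[
\mathcal{Q}_\ell := (1-\nu)e_\ell^2 + 2C_\nu \osc_{h_\ell}^2 + C_{\text{re}}^{-1}\bar{\eta}_{h_\ell}^2
\]
for the quantity appearing on both sides of the claim, the goal reduces to $\mathcal{Q}_{\ell+1}\le\alpha\mathcal{Q}_\ell$ for some $\alpha\in(0,1)$.

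First I would apply quasi-orthogonality (Theorem \ref{qosigma}) with $\mathcal{R}_H=\mathcal{R}_\ell$, the oscillation reduction (Lemma \ref{osc}) multiplied by $2C_\nu$, and the estimator reduction (Lemma \ref{reduction}) multiplied by $C_{\text{re}}^{-1}$, and add the three inequalities. Two cancellations then occur: the $+C_\nu\osc_{h_\ell}^2(f,\mathcal{R}_\ell)$ from quasi-orthogonality cancels with the $-C_\nu\osc_{h_\ell}^2(f,\mathcal{R}_\ell)$ from the weighted Lemma \ref{osc}, and the $-E_\ell^2$ from quasi-orthogonality cancels with the $+E_\ell^2$ produced by the estimator reduction after the division by $C_{\text{re}}$. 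What remains is
\[
\mathcal{Q}_{\ell+1}\le e_\ell^2 + 2C_\nu \osc_{h_\ell}^2 + \gamma C_{\text{re}}^{-1}\bar{\eta}_{h_\ell}^2 = \mathcal{Q}_\ell + \nu e_\ell^2 - (1-\gamma)C_{\text{re}}^{-1}\bar{\eta}_{h_\ell}^2,
\]
where the second equality just splits $e_\ell^2=(1-\nu)e_\ell^2+\nu e_\ell^2$ and extracts a full copy of $\mathcal{Q}_\ell$.

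Next I would employ reliability in the form $e_\ell^2\le C_{\text{rel}}\bar{\eta}_{h_\ell}^2$ (which follows from \eqref{rel} together with $\bar{\eta}_{h_\ell}^2=\eta_{h_\ell}^2+\osc_{h_\ell}^2$) to absorb the $\nu e_\ell^2$ term. Fixing any $\nu\in(0,1)$ with $\nu C_{\text{rel}}\le (1-\gamma)/(2 C_{\text{re}})$—a constraint depending only on $\theta,\mu,\Omega,\gamma_0$ through $\gamma, C_{\text{re}}, C_{\text{rel}}$—gives
\[
\mathcal{Q}_{\ell+1}\le \mathcal{Q}_\ell - \frac{1-\gamma}{2 C_{\text{re}}}\bar{\eta}_{h_\ell}^2.
\]
Applying reliability and the trivial bound $\osc_{h_\ell}^2\le \bar{\eta}_{h_\ell}^2$ termwise in $\mathcal{Q}_\ell$ yields $\mathcal{Q}_\ell\le C_\star\bar{\eta}_{h_\ell}^2$ with $C_\star=(1-\nu)C_{\text{rel}}+2C_\nu+C_{\text{re}}^{-1}$, and replacing $\bar{\eta}_{h_\ell}^2$ by $\mathcal{Q}_\ell/C_\star$ produces the desired contraction with
\[
\alpha := 1-\frac{1-\gamma}{2 C_{\text{re}} C_\star}\in(0,1).
\]

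There is no serious obstacle in this last step: everything delicate has already been done in Sections \ref{secsigma} and \ref{secqo}, and the manipulation above is a standard instance of the CKNS framework \cite{CKNS2008, HuangXu2012, HLMS2019}. The only point requiring care is the ordering of the constants—$\nu$ must be fixed first to determine $C_\nu=\nu^{-1}C_\sigma$, hence $C_\star$, and finally $\alpha$—so that the dependences of both $\nu$ and $\alpha$ involve only $\theta,\mu,\Omega,\gamma_0$, as claimed.
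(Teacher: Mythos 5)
Your argument is correct and is essentially the paper's own proof: it combines Theorem \ref{qosigma}, Lemma \ref{osc} (weighted by $2C_\nu$), Lemma \ref{reduction} (weighted by $C_{\text{re}}^{-1}$), and the reliability bound \eqref{rel} together with $\osc_{h_\ell}^2\le\bar{\eta}_{h_\ell}^2$ in exactly the same way. The only difference is cosmetic bookkeeping at the end: the paper splits $e_\ell^2$ and $\osc_{h_\ell}^2$ into $\alpha$-weighted parts and solves a linear equation for $\alpha$ (under the condition $\nu<(1-\gamma)/(C_{\text{re}}C_{\text{rel}})$), whereas you fix $\nu$ by the explicit smallness condition $\nu C_{\text{rel}}\le(1-\gamma)/(2C_{\text{re}})$ and then read off $\alpha=1-(1-\gamma)/(2C_{\text{re}}C_\star)$, both choices giving constants depending only on $\theta,\mu,\Omega,\gamma_0$.
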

\begin{proof}
A combination of Theorem \ref{qosigma} and Lemma \ref{osc} shows that
\begin{align}\label{ing1}
    (1-\nu)e_{\ell+1}^2+2C_\nu\osc_{h_{\ell+1}}^2\leq e_{\ell}^2+2C_\nu\osc^2_{h_{\ell}}-E_{h_\ell}^2.
\end{align}
On the other hand, the reliability \eqref{rel} gives
\begin{align}\label{ing2}
    e^2_\ell\leq C_{\text{rel}}\bar{\eta}_{h_\ell}^2,\quad \osc^2_{h_\ell}\leq\bar{\eta}^2_{h_\ell}.
\end{align}
Let $\alpha\in(0,1)$ be a constant.
Using \eqref{ing1} and Lemma \ref{reduction}, we have 
\begin{equation*}
    \begin{aligned}
    &(1-\nu)e_{\ell+1}^2+2C_\nu\osc_{h_{\ell+1}}^2+C_{\text{re}}^{-1}\bar{\eta}_{h_{\ell+1}}^2\leq e_{\ell}^2+2C_\nu\osc_{h_{\ell}}^2+C_{\text{re}}^{-1}\gamma\bar{\eta}_{h_\ell}^2\\
    &\quad\leq\alpha(1-\nu)e_{\ell}^2+2\alpha C_\nu\osc_{h_{\ell}}^2+\big(1-\alpha(1-\nu)\big)e_\ell^2+2(1-\alpha) C_\nu\osc_{h_\ell}^2+C_{\text{re}}^{-1}\gamma\bar{\eta}_{h_\ell}^2.
    \end{aligned}
\end{equation*}
Combining it with \eqref{ing2} yields
\begin{equation}\label{con1}
    \begin{aligned}
    &(1-\nu)e_{\ell+1}^2+2C_\nu\osc_{h_{\ell+1}}^2+C_{\text{re}}^{-1}\bar{\eta}_{h_{\ell+1}}^2\leq\alpha(1-\nu)e_{\ell}^2+2\alpha C_\nu\osc_{h_\ell}^2\\
    &\quad+\{\big(1-\alpha(1-\nu)\big)C_{\text{rel}}+2(1-\alpha)C_\nu+C_{\text{re}}^{-1}\gamma\}\bar{\eta}_{h_\ell}^2.
    \end{aligned}
\end{equation}
Let $\big(1-\alpha(1-\nu)\big)C_{\text{rel}}+2(1-\alpha)C_\nu+C^{-1}_{\text{re}}\gamma=\alpha C^{-1}_{\text{re}},$ i.e., 
\begin{align*}
    \alpha=\frac{C_{\text{rel}}+2C_\nu+C^{-1}_{\text{re}}\gamma}{(1-\nu)C_{\text{rel}}+2C_\nu+C^{-1}_{\text{re}}},\quad
\end{align*}
Clearly $\alpha<1$ provided $0<\nu<\frac{1-\gamma}{C_{\text{re}}C_{\text{rel}}}$. The contraction follows from \eqref{con1}.
\end{proof}

The efficiency of $\bar{\eta}_h$ follows with the same bubble function technique in \cite{CHHM2018,CGG2019}:
\begin{align}\label{eff}
    C_{\text{eff}}\bar{\eta}^2_h(\sigma_h)&\leq\|\sigma-\sigma_h\|^2_\mathbb{A}+\osc_h^2(f),
\end{align}
where the constant $C_{\text{eff}}>0$ depends only on $\mu, \Omega, \gamma_0$, $r$. 
An essential ingredient in the complexity analysis is the following cardinality estimate
\begin{equation}\label{card}
    \#\mathcal{T}_{h_\ell}-\#\mathcal{T}_{h_0}\lesssim\sum_{j=0}^{\ell-1}\mathcal{M}_j.
\end{equation}
It has been shown in \cite{BinevDahmenDeVore2004} that \eqref{card} holds provided the newest vertices in the initial mesh $\CT_{h_0}$ are suitably chosen.
In addition, the marking parameter $\theta$ is required to be below the threshold $$\theta_{*}=\min\left(1,\frac{C_{\text{eff}}}{3C_{\text{drel}}}\right)^\frac{1}{2},$$
which can be derived in the next lemma.
\begin{lemma}\label{optmarking}
\emph{(optimal marking)}
Let $\mathcal{T}_h, \CT_H\in\mathbb{T}$ with $\CT_H\leq\Th$. Set $\mu=\frac{1}{2}(1-\frac{\theta^2}{\theta^2_*})$. If 
\begin{equation}\label{testreduction}
\|\sigma-\sigma_h\|_\mathbb{A}^2+\osc^2_h(f)\leq\mu\left\{\|\sigma-\sigma_H\|_\mathbb{A}^2+\osc^2_H(f)\right\}.
\end{equation}
Then the set $\widetilde{\mathcal{R}}_H$ in Lemma \ref{propIH} verifies the D\"orfler marking property
$$\bar{\eta}_H(\sigma_H,\widetilde{\mathcal{R}}_H)\geq\theta\bar{\eta}_H.$$
\end{lemma}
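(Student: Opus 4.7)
The plan is to follow the standard optimal marking template from \cite{CKNS2008,Stevenson2007}, combining the discrete reliability (Theorem \ref{disupper}), the efficiency \eqref{eff}, and a short manipulation of the oscillation. The target is $\bar{\eta}_H^2(\sigma_H,\widetilde{\CR}_H)\geq\theta^2\bar{\eta}_H^2$; since the definitions of $\mu$ and $\theta_*$ give $1-2\mu=\theta^2/\theta_*^2$, I would chase precisely the factor $1-2\mu$ through the hypothesis.

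First, rewrite \eqref{testreduction} in the form
\[
(1-2\mu)\bigl(\|\sigma-\sigma_H\|_\mathbb{A}^2+\osc_H^2\bigr)\leq \|\sigma-\sigma_H\|_\mathbb{A}^2+\osc_H^2-2\|\sigma-\sigma_h\|_\mathbb{A}^2-2\osc_h^2.
\]
I would handle the two pieces on the right separately. For the stress piece, the elementary bound $(a+b)^2\leq 2a^2+2b^2$ yields $\|\sigma-\sigma_H\|_\mathbb{A}^2-2\|\sigma-\sigma_h\|_\mathbb{A}^2\leq 2\|\sigma_h-\sigma_H\|_\mathbb{A}^2$; no quasi-orthogonality is needed at this stage. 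For the oscillation piece, $\osc_h(f,T)=\osc_H(f,T)$ on every unrefined $T\in\CT_H\cap\CT_h$, which gives $\osc_h^2\geq\osc_H^2(f,\CT_H\setminus\CR_H)$ and consequently $\osc_H^2-2\osc_h^2\leq\osc_H^2(f,\CR_H)$. Adding these,
\[
(1-2\mu)\bigl(\|\sigma-\sigma_H\|_\mathbb{A}^2+\osc_H^2\bigr)\leq 2\|\sigma_h-\sigma_H\|_\mathbb{A}^2+\osc_H^2(f,\CR_H).
\]

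Next, apply discrete reliability (Theorem \ref{disupper}) to bound $\|\sigma_h-\sigma_H\|_\mathbb{A}^2\leq C_{\text{drel}}\bar{\eta}_H^2(\sigma_H,\widetilde{\CR}_H)$ and use $\osc_H^2(f,\CR_H)\leq\bar{\eta}_H^2(\sigma_H,\widetilde{\CR}_H)$ to collapse the right-hand side into $(2C_{\text{drel}}+1)\bar{\eta}_H^2(\sigma_H,\widetilde{\CR}_H)\leq 3C_{\text{drel}}\bar{\eta}_H^2(\sigma_H,\widetilde{\CR}_H)$, where the last step uses $C_{\text{drel}}\geq 1$ (which can be assumed without loss of generality). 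Simultaneously, the efficiency \eqref{eff} gives $C_{\text{eff}}\bar{\eta}_H^2\leq\|\sigma-\sigma_H\|_\mathbb{A}^2+\osc_H^2$ on the left, so
\[
\frac{(1-2\mu)C_{\text{eff}}}{3C_{\text{drel}}}\,\bar{\eta}_H^2\leq \bar{\eta}_H^2(\sigma_H,\widetilde{\CR}_H).
\]
The prefactor equals $(1-2\mu)\theta_*^2=\theta^2$ by the definitions of $\theta_*$ and $\mu$, delivering the D\"orfler property.

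The proof is mechanical given the prior machinery, so I do not expect a serious obstacle. The one mildly delicate point worth double-checking is the oscillation accounting $\osc_H^2-2\osc_h^2\leq\osc_H^2(f,\CR_H)$, which justifies using $\bar{\eta}_H^2(\sigma_H,\widetilde{\CR}_H)$ on the right rather than $\eta_H^2(\sigma_H,\widetilde{\CR}_H)$ plus a separately marked oscillation term. This is precisely what lets Algorithm \ref{AMFEM} avoid a dedicated oscillation marking step and forces the threshold $\theta_*$ to depend only on the combined ratio $C_{\text{eff}}/C_{\text{drel}}$.
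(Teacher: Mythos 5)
Your proposal is correct and follows essentially the same route as the paper's own proof: expand the hypothesis into $(1-2\mu)(\|\sigma-\sigma_H\|_\mathbb{A}^2+\osc_H^2)$, bound the stress part by $2\|\sigma_H-\sigma_h\|_\mathbb{A}^2$ and the oscillation part by $\osc_H^2(f,\CR_H)$, then invoke Theorem \ref{disupper} and the efficiency \eqref{eff} to reach $(1-2\mu)C_{\text{eff}}\bar{\eta}_H^2\leq 3C_{\text{drel}}\bar{\eta}_H^2(\sigma_H,\widetilde{\CR}_H)$. The only cosmetic point is that the final prefactor is at least $\theta^2$ rather than exactly $(1-2\mu)\theta_*^2$ when the minimum in $\theta_*$ is attained at $1$, which is harmless and is exactly how the paper closes the argument via $\theta_*^2\leq C_{\text{eff}}/(3C_{\text{drel}})$.
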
 
\begin{proof}
Using \eqref{eff} and \eqref{testreduction}, we have
\begin{equation}\label{interopt}
\begin{aligned}
&(1-2\mu)C_\text{eff}\bar{\eta}^2_H\leq(1-2\mu)\big(\|\sigma-\sigma_H\|_\mathbb{A}^2+\osc^2_H(f)\big)\\
&\qquad\leq\|\sigma-\sigma_H\|_\mathbb{A}^2-2\|\sigma-\sigma_h\|_\mathbb{A}^2+\osc^2_H(f)-2\osc^2_h(f)\\
&\qquad\leq2\|\sigma_H-\sigma_h\|_\mathbb{A}^2+\osc_H^2(f,\CR_H).
\end{aligned}
\end{equation}
In the last step, we use the obvious inequality $$\osc^2_H(f)-2\osc^2_h(f)\leq\osc^2_H(f)-\osc^2_h(f)\leq\osc_H^2(f,\CR_H).$$ It then follows from \eqref{interopt} and Theorem \ref{disupper} that
$$(1-2\mu)C_\text{eff}\bar{\eta}^2_H\leq3C_{\text{drel}}\bar{\eta}^2_H(\sigma_H,\widetilde{\CR}_H).$$
The proof is then complete by $\theta^2_*\leq\frac{C_\text{eff}}{3C_\text{drel}}$.
\end{proof}
Under these assumptions, the convergence rate of can be characterized by the nonlinear approximation property of $\sigma$ and $f$. Let $\widetilde{\mathbb{T}}$ be the collection of grids created by newest vertex bisection from $\mathcal{T}_{h_0}.$ For $s>0$, define the semi-norms
\begin{align*}
    |\sigma|_{s}&=\sup_{N>0}\{N^s\min_{\CT_h\in\widetilde{\mathbb{T}},\#\Th-\#\CT_{h_0}\leq N}\min_{\tau_h\in \Sigma_h}\|\sigma-\tau_h\|_\mathbb{A}\},\\
        |f|^o_{s}&=\sup_{N>0}\{N^s\min_{\CT_h\in\widetilde{\mathbb{T}},\#\Th-\#\CT_{h_0}\leq N}\osc_h(f)\}.
\end{align*}
One can also define the coupled approximation semi-norm
\begin{align*}
    |(\sigma,f)|_s:=\sup_{N>0}\{N^s\min_{\Th\in\widetilde{\mathbb{T}},\#\CT_h-\#\CT_{h_0}\leq N}(\|\sigma-\sigma_h\|_\mathbb{A}^2+\osc_h(f)^2)^\frac{1}{2}\}.
\end{align*}
Since $\lambda, \mu$ are constants, we have the following equivalence 
\begin{align*}
    |(\sigma,f)|_s<\infty\Leftrightarrow|\sigma|_s+|f|_s^o<\infty
\end{align*}
as argued in Lemma 5.3 of \cite{CKNS2008}.
The quasi-optimal convergence rate of Algorithm \ref{AMFEM} follows from the contraction and previous assumptions, see, e.g., \cite{CKNS2008}, Lemma 5.10 and Theorems 5.1 for details.
\begin{theorem}[quasi-optimality]\label{qopt}
Let $\{(\sigma_{h_\ell},u_{h_\ell},\CT_{h_\ell})\}_{\ell\geq0}$ be a sequence of finite element solutions and meshes generated by Algorithm \ref{AMFEM}.  Assume $|\sigma|_s+|f|_s^o<\infty$,  $\theta\in(0,\theta_{*})$, and \eqref{card} hold. There exists a constant $C_{\emph{opt}}$ depending only on $\theta, \theta_*, \alpha$, $\mu, \Omega$, $\gamma_0$, such that
\begin{equation*}
\big(\|\sigma-\sigma_{h_\ell}\|_\mathbb{A}^2+\osc^2_{h_\ell}\big)^\frac{1}{2}\leq C_{\emph{opt}}\big(|\sigma|_{s}+|f|_s^o\big)\big(
\#\CT_{h_\ell}-\#\CT_{h_0}\big)^{-s}.
\end{equation*}
\end{theorem}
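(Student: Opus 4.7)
The plan is to follow the standard quasi-optimality proof template of \cite{CKNS2008}, which combines four ingredients already (or just) established in the paper: the contraction property, the discrete reliability of Theorem \ref{disupper} (packaged as optimal marking via Lemma \ref{optmarking}), the minimality of D\"orfler marking enforced in the procedure \textsf{MARK}, and the mesh-cardinality estimate \eqref{card} that is built into newest-vertex bisection once the initial sequence is chosen compatibly.

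First I would invoke the approximation-class equivalence recalled right before the theorem: under $|\sigma|_s+|f|_s^o<\infty$, for every tolerance $\varepsilon>0$ there exists $\CT_\varepsilon\in\widetilde{\mathbb{T}}$ with $\#\CT_\varepsilon-\#\CT_{h_0}\lesssim\varepsilon^{-1/s}(|\sigma|_s+|f|_s^o)^{1/s}$ realizing $\|\sigma-\sigma_\varepsilon\|_\mathbb{A}^2+\osc_\varepsilon^2(f)\leq\varepsilon^2$. For each level $\ell$ I would choose $\varepsilon=\varepsilon_\ell$ so that the right-hand side of \eqref{testreduction} at the pair $(\CT_{h_\ell},\CT_\varepsilon)$ triggers Lemma \ref{optmarking}, that is, pick $\varepsilon_\ell^2=\mu\big(\|\sigma-\sigma_{h_\ell}\|_\mathbb{A}^2+\osc_{h_\ell}^2(f)\big)$ with $\mu=\frac12(1-\theta^2/\theta_*^2)$. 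Here I would use the overlay $\CT_{h_\ell}\oplus\CT_\varepsilon$ in place of $\CT_h$ in Lemma \ref{optmarking}; the overlay is a common refinement whose cardinality satisfies $\#(\CT_{h_\ell}\oplus\CT_\varepsilon)\leq\#\CT_{h_\ell}+\#\CT_\varepsilon-\#\CT_{h_0}$, it lies in $\widetilde{\mathbb{T}}$, and it inherits the error bound $\|\sigma-\sigma_{h_\ell\oplus\varepsilon}\|_\mathbb{A}^2+\osc_{h_\ell\oplus\varepsilon}^2(f)\leq\varepsilon_\ell^2$ by Galerkin orthogonality and monotonicity of the oscillation.

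By Lemma \ref{optmarking} the refinement set $\widetilde{\CR}_\ell$ associated with $\CT_{h_\ell}\leq\CT_{h_\ell}\oplus\CT_\varepsilon$ is D\"orfler-admissible with parameter $\theta$; by the minimal-cardinality choice of $\mathcal{M}_\ell$ in \textsf{MARK} this gives $\#\mathcal{M}_\ell\leq\#\widetilde{\CR}_\ell\leq\#(\CT_{h_\ell}\oplus\CT_\varepsilon)-\#\CT_{h_\ell}\leq\#\CT_\varepsilon-\#\CT_{h_0}\lesssim\varepsilon_\ell^{-1/s}(|\sigma|_s+|f|_s^o)^{1/s}$. Summing this over $j=0,\dots,\ell-1$ and using the cardinality estimate \eqref{card} yields
\begin{equation*}
\#\CT_{h_\ell}-\#\CT_{h_0}\lesssim(|\sigma|_s+|f|_s^o)^{1/s}\sum_{j=0}^{\ell-1}\big(\|\sigma-\sigma_{h_j}\|_\mathbb{A}^2+\osc_{h_j}^2\big)^{-1/(2s)}.
\end{equation*}
The contraction theorem controls the combined quantity $Q_j:=(1-\nu)e_j^2+2C_\nu\osc_{h_j}^2+C_{\text{re}}^{-1}\bar\eta_{h_j}^2$ by $Q_j\leq\alpha^{j-\ell}Q_\ell$ backward in $j$, and $Q_j$ is equivalent to $e_j^2+\osc_{h_j}^2$ up to constants depending only on $\nu,C_\nu,C_{\text{re}},C_{\text{rel}}$. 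Hence the sum above is a geometric series in $\alpha^{1/(2s)}<1$ dominated by a constant times $(e_\ell^2+\osc_{h_\ell}^2)^{-1/(2s)}$, and rearranging delivers the claimed rate with $C_{\text{opt}}$ depending only on the listed parameters.

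The main obstacle, though standard, is the overlay step: I need to ensure that $\CT_{h_\ell}\oplus\CT_\varepsilon$ lies in the forest $\widetilde{\mathbb{T}}$ produced by newest-vertex bisection from $\CT_{h_0}$ so that both \eqref{card} and Theorem \ref{disupper} apply to the pair $(\CT_{h_\ell},\CT_{h_\ell}\oplus\CT_\varepsilon)$; this is exactly what the hypothesis on the initial labeling of $\CT_{h_0}$ (as in \cite{BinevDahmenDeVore2004,Stevenson2008}) guarantees. Everything else is bookkeeping around the geometric-series summation and the equivalence $|(\sigma,f)|_s\simeq|\sigma|_s+|f|_s^o$ from Lemma 5.3 of \cite{CKNS2008}.
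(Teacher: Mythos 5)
Your argument is the standard quasi-optimality template of \cite{CKNS2008} (overlay mesh from the approximation class, optimal marking via Lemma \ref{optmarking} and discrete reliability, minimality of \textsf{MARK}, the cardinality bound \eqref{card}, and a geometric-series summation driven by the contraction), which is exactly the route the paper takes, since it simply defers to \cite{CKNS2008}, Lemma 5.10 and Theorem 5.1 rather than writing the details out. Two minor repairs: the error bound on the overlay should be obtained from the quasi-orthogonality of Theorem \ref{qosigma} rather than plain Galerkin orthogonality (which is not available for the stress error in the $\|\cdot\|_\mathbb{A}$-norm of the mixed system), so $\varepsilon_\ell^2$ must be taken a fixed constant factor smaller than $\mu\big(\|\sigma-\sigma_{h_\ell}\|_\mathbb{A}^2+\osc_{h_\ell}^2\big)$; and the contraction yields $Q_\ell\leq\alpha^{\ell-j}Q_j$, i.e.\ $Q_j\geq\alpha^{j-\ell}Q_\ell$, not the reversed inequality you wrote, although this correct direction is precisely what your geometric-series step actually uses.
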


\section{Local interpolation and discrete approximation}\label{secdisapprox}
In this section, we give proofs of Proposition \ref{propIH} and Lemma \ref{disapprox}. The $H^1$-bounded regularized interpolation in \cite{GiraultScott2002} does not satisfy the property \eqref{localV}. For our purpose, an $H^2$-bounded interpolation is enough and we will not regularize the degrees of freedom based on point evaluation.
\begin{proof}[Proof of Proposition \ref{propIH}]
Given $x\in\mathcal{V}_H$, $e\in\CE_H$, $w_H\in W_H$, we say $\partial_e\partial_ew_H(x)=\partial^2_e w_H(x)$ is a second edge derivative at $x,$ where $\partial_e=\partial_{t_e}$ is the directional derivative along $t_e$. For two edges $e, e^\prime$ of $T\in\CT_H$ having $x$, $\partial_e\partial_{e^\prime}(w_H|_T)(x)$ is called a cross derivative at $x.$ A vertex $x$ is called  singular if all edges in $\mathcal{E}_H$ meeting at $x$ fall on two straight lines.
The nodal variables (global degrees of freedom) of $w\in W_H$ given in \cite{GiraultScott2002} are briefly described as follows.
\begin{enumerate}
    \item the value of $w$ and $\nabla w$ at each vertex $x\in\mathcal{V}_H$;\label{dof1}
    \item the edge normal derivative $\partial_{n_e}w$ at $r+1$ distinct interior points of each edge $e\in\CE_H$;
    \item the value of $w$ at $r$ distinct interior points of each edge $e\in\CE_H$;
    \item the value of $w$ at $r(r+1)/2$ distinct interior points of each triangle $T\in\CT_H$;
    \item one cross derivative of $w$ at each vertex $x\in\mathcal{V}_H$ and two cross derivatives of $w$ at each nonsingular boundary vertex $x\in\mathcal{V}_H$;
    \item the second edge derivatives of $w$ for all edges meeting at each vertex $x\in\mathcal{V}_H,$ 
with the exception of one interior edge per nonsingular vertex.
\end{enumerate}
To preserve boundary conditions, for each boundary vertex, one edge used for defining edge and cross derivatives at that vertex is chosen to be on $\partial\Omega.$

Let $\{w\rightarrow D_iw(a_i)\}_{i=1}^N$ denote the collection of the above nodal variables, where a node $a_i$ could be a vertex in $\CT_H$ or an interior point of an edge/triangle in $\CT_H$, $D_i$ is a differential operator of order $|D_i|=0$ (point evaluation), $1$ (edge normal derivative), or $2$ (second edge derivative and cross derivative). Note that  $\{a_i\}_{i=1}^N$ are not distinct since a node may be associated with multiple differential operators. 

If $a_i\in\mathcal{V}_H$ and $D_i=\partial_{e^1_i}\partial_{e^2_i}$ is a second edge derivative or cross derivative, let $e_i=e^1_i\ni a_i$. If $a_i\in\mathcal{V}_H$ and $D_i=\partial_{x_1}$ or $\partial_{x_2}$, let $e_i\ni a_i$ be any edge in $\CE_H$. If $a_i$ is an interior point of $e\in\CE_H$, we  choose $e_i=e$. Moreover, $e_i$ is chosen to be in $\partial\Omega$ if $a_i$ is a boundary vertex. By Riesz's representation theorem, there exists a polynomial $\psi_i\in\mathcal{P}_{r+5}(e_i)$, such that
\begin{align}\label{Riesz}
    \int_{e_i}w\psi_ib_ids=w(a_i)\text{ for all }w\in\mathcal{P}_{r+5}(e_i),
\end{align}
where $b_i$ is the edge bubble polynomial of unit size vanishing on the boundary of  $e_i$. For a second edge derivative or cross derivative $\partial_{e^1_i}\partial_{e^2_i}$, using \eqref{Riesz} and integrating by parts formula on $e_i$, we have
\begin{equation}\label{Riesz2}
    \begin{aligned}
    &(\partial_{e^1_i}\partial_{e^2_i}w)(a_i)=\int_{e_i}\partial_{e^1_i}\partial_{e_i^2}w(a_i)\psi_ib_ids\\
    &\quad=-\int_{e_i}\partial_{e_i^2}w(a_i)\partial_{e_i^1}(\psi_ib_i)ds\text{ for all }w\in\mathcal{P}_{r+5}(e_i).
    \end{aligned}
\end{equation}

Let $\{\phi_i\}_{i=1}^N$ be the basis dual to the unisolvent set $\{w\rightarrow D_iw(a_i)\}_{i=1}^N$. For $w_h\in W_h$, we define the
interpolant $I_Hw_h$ as
\begin{equation}\label{IHwh}
    \begin{aligned}
    I_Hw_h&=\sum_{|D_i|=0}w_h(a_i)\phi_i+\sum_{|D_i|=1}\left(\int_{e_i}({D}_iw_h){\psi}_ib_ids\right)\phi_i\\
    &\quad-\sum_{|D_i|=2}\left(\int_{e_i}\partial_{e_i^2}w_h(a_i)\partial_{e^1_i}(\psi_ib_i)ds\right)\phi_i.
\end{aligned}
\end{equation}
The definition of $I_H$ implies $w_h-I_Hw_h=0$ at each node $a_i$ and in particular \eqref{localV}.
For $T\in\CT_H\backslash\widetilde{\CR}_H$, the choice of $e_i$ implies $w_h|_{e_i}\in\mathcal{P}_{r+5}(e_i), \nabla w_h|_{e_i}\in\mathcal{P}_{r+4}(e_i)$. Using this fact and \eqref{Riesz}, \eqref{Riesz2}, we obtain 
\begin{align}\label{localDi}
    D_i(I_Hw_h)=D_iw_h\text{ for all }a_i\subset \overline{T}.
\end{align}
Due to \eqref{localDi} and the unisolvence of $\{D_i\}_{a_i\in T}$ (see the analysis in \cite{MS1975,GiraultScott2002}), we have $(w_h-I_Hw_h)|_T=0$ and thus verify the property \eqref{localT}. 

We note that $w_h\in W_h$ implies $(\nabla w_h)|_{\Gamma_j}$ is constant and thus $D_iw_h=D_iI_Hw_h$ with $|D_i|=1$ for each boundary node $a_i\in\Gamma_j$. It could also be observed from \eqref{IHwh}, \eqref{Riesz2}, and constancy of $(\nabla w_h)|_{\Gamma_j}$ that $\partial_{e_i}\partial_{e_i}I_Hw_h=\partial_{e_i}\partial_{e_i}w_h$ for each boundary $e_i\subset\Gamma_j$. Therefore enough nodal variables vanish to enforce $w_h=I_Hw_h$ on $\Gamma_j$ and \eqref{GammaN} is confirmed. 

For the same reason above, $\partial_{e^1_i}\partial_{e^2_i}w_h=\partial_{e^1_i}\partial_{e^2_i}I_Hw_h$ for each cross derivative assigned to boundary node $a_i\in\Gamma_j$. Therefore enough nodal variables vanish to enforce $\partial_n(w_h-I_Hw_h)|_{\Gamma_j}=0$.

The interpolation estimate \eqref{approxG} directly follows from the same proof of Theorem 7.3 in \cite{GiraultScott2002} together with a trace inequality.
\end{proof}

The rest of this section is devoted to the proof of Lemma \ref{disapprox}.
First we present a modified  Korn's inequality on each local triangle $T\in\Th$.
\begin{lemma}\label{Korn1}
Given $\Th\in\mathbb{T}$, $T\in\Th$ and $v\in H^1(T,\mathbb{R}^2),$ we have
\begin{align*}
    |v|_{H^1(T)}\leq {C}_{\emph{Korn}}(\|\varepsilon(v)\|_{T}+h_T^{-1}\|Q_Tv\|_T),
\end{align*}
where $Q_Tv$ is the $L^2$ projection of $v$ onto $\mathcal{RM}$, and $C_{\emph{Korn}}$ is a constant relying only on $\gamma_0$. 
\end{lemma}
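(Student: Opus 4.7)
The plan is to reduce to a unit-size reference triangle by a pure dilation, establish the estimate there, and then rescale. For $T \in \Th$ with centroid $x_T$, I would consider the dilation $\hat{x} = (x-x_T)/h_T$ mapping $T$ onto the unit-area triangle $\hat{T}$, and set $\hat{v}(\hat{x}) = v(x_T + h_T\hat{x})$. In two dimensions this dilation yields the clean identities $|v|_{H^1(T)} = |\hat{v}|_{H^1(\hat{T})}$, $\|\varepsilon(v)\|_T = \|\varepsilon(\hat{v})\|_{\hat{T}}$, and $\|v\|_T = h_T\|\hat{v}\|_{\hat{T}}$. Because rigid body motions pull back to rigid body motions under affine maps, $Q_T$ commutes with the change of variable in the sense that $(Q_T v)(x_T + h_T\hat{x}) = Q_{\hat{T}}\hat{v}(\hat{x})$, so in particular $\|Q_T v\|_T = h_T\|Q_{\hat{T}}\hat{v}\|_{\hat{T}}$.

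On $\hat{T}$, the classical second Korn inequality together with $\ker \varepsilon = \mathcal{RM}$ yields $|\hat{w}|_{H^1(\hat{T})} \leq C_{\hat{T}}\|\varepsilon(\hat{w})\|_{\hat{T}}$ for every $\hat{w} \in H^1(\hat{T},\mathbb{R}^2)$ with $Q_{\hat{T}}\hat{w} = 0$; this follows by a routine compactness-contradiction argument based on Rellich's theorem. Applying it to $\hat{w} = \hat{v} - Q_{\hat{T}}\hat{v}$ (for which $\varepsilon(\hat{w}) = \varepsilon(\hat{v})$ since $Q_{\hat{T}}\hat{v} \in \ker\varepsilon$), and adding back $|Q_{\hat{T}}\hat{v}|_{H^1(\hat{T})}$, which is controlled by $\|Q_{\hat{T}}\hat{v}\|_{\hat{T}}$ thanks to the equivalence of all norms on the finite-dimensional space $\mathcal{RM}$ restricted to a unit-size domain, I would obtain
\[
|\hat{v}|_{H^1(\hat{T})} \leq C_{\hat{T}}\bigl(\|\varepsilon(\hat{v})\|_{\hat{T}} + \|Q_{\hat{T}}\hat{v}\|_{\hat{T}}\bigr).
\]
Unwinding the scaling identities then produces the desired estimate on $T$ with constant $C_{\hat{T}}$.

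The only nonroutine step is to make $C_{\hat{T}}$ uniform across $\Th \in \mathbb{T}$. Shape regularity guarantees that the rescaled triangles $\hat{T}$ range, modulo rigid motions, over the set of unit-diameter triangles whose inradius is bounded below by $1/\gamma_0$; this is a precompact family, and the Korn constant depends continuously on the shape of the domain, so it attains a uniform upper bound $C_{\text{Korn}} = C(\gamma_0)$. I expect this uniformity argument to be the main obstacle in a fully rigorous write-up: either one invokes the compactness directly, or one instead maps every $T$ affinely onto a single fixed reference triangle with condition number controlled by $\gamma_0$ and then bookkeeps the linear-algebra factors that $\varepsilon$ picks up under the non-conformal pullback.
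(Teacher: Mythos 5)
Your argument is correct and takes essentially the same route as the paper: both rescale $T$ to a unit-size triangle, use the compactness-based Korn inequality with the $\mathcal{RM}$-projection term $\|Q\cdot\|$, and obtain uniformity of the constant over $\Th\in\mathbb{T}$ from shape regularity via a continuity-plus-compactness argument. The paper makes your ``the Korn constant depends continuously on the shape of the domain'' step rigorous by defining $\Phi_v(B_T)$ and showing that this family is equicontinuous on $GL(2,\mathbb{R})$, which is precisely the ``invoke the compactness directly'' option you flag as the main remaining work.
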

\begin{proof}
The standard compactness argument (cf.~Theorem 11.2.16 of \cite{BrennerScott2008}) implies 
\begin{align}\label{korn0}
    \|v\|_{H^1(T)}\leq{C}_T(\|\varepsilon(v)\|_{T}+\|Q_Tv\|_T),
\end{align}
where ${C}_T$ is a constant depending on $T.$ It remains to estimate $C_T$ by a homogeneity argument. Consider a reference triangle ${K}$ and the affine mapping $F_T: {K}\rightarrow T$ given by
$F_T(x)=B_Tx+b_T.$
Define 
\begin{align*}
    \Phi(B_T)=\sup_{v\in H^1(K,\mathbb{R}^2),\|v\|_{H^1({K})}=1}\Phi_{v}(B_T),
\end{align*}
where
\begin{align*}
    \Phi_{v}(B_T)&=\frac{|v\circ F_T^{-1}|_{H^1(T)}}{\|\varepsilon(v\circ F_T^{-1})\|_T+h_T^{-1}\|Q_T(v\circ F_T^{-1})\|_T}.
\end{align*}
Note that $\Phi_v$ is independent of $b_T.$
Due to \eqref{korn0}, the function $\Phi$ is well-defined. It is straightforward to check that $$\{\Phi_v(\cdot)\}_{v\in H^1({K},\mathbb{R}^2),\|v\|_{H^1({K})}=1}$$ is a family of equicontinuous functions on $GL(2,\mathbb{R}).$ Hence $\Phi$ defined by taking the supremum of this family must be continuous on $GL(2,\mathbb{R}).$ Let $\widehat{T}=\{h_T^{-1}x: x\in{T}\}$ be the scaled triangle of unit size. Since $\Th$ is shape regular, $\{B_{\widehat{T}}: T\in\Th\}$ is contained in a  compact subset of $GL(2,\mathbb{R})$, see, e.g., \cite{BrennerScott2008}. Combining the continuity and compactness, we obtain
\begin{align}\label{bdhat}
    \sup_{T\in\Th}\Phi(B_{\widehat{T}})=C_{\text{sup}}<\infty,
\end{align}
where $C_{\text{sup}}$ depends on the shape regularity of $\Th$. Therefore using a \emph{scaling} transformation and \eqref{bdhat}, we obtain
\begin{align*}
    \Phi(B_T)&=\sup_{v\in H^1({K},\mathbb{R}^2),\|v\|_{H^1({K})}=1}\frac{|v\circ F_{\widehat{T}}^{-1}|_{H^1(\widehat{T})}}{\|\varepsilon(v\circ F_{\widehat{T}}^{-1})\|_{\widehat{T}}+\|Q_{\widehat{T}}({v}\circ F_{\widehat{T}}^{-1})\|_{\widehat{T}}}\\
    &\lesssim\Phi(B_{\widehat{T}})\leq C_{\text{sup}}.
\end{align*}
The proof is complete.
\end{proof}
Then we present a modified discrete Korn's inequality on a triangle $T$. 
\begin{lemma}\label{disapprox1}
Let $\Th, \CT_H\in\mathbb{T}$ with $\CT_H\leq\Th$. For any $T\in\mathcal{T}_H$, let $\Th(T)=\{T^\prime\in\Th: T^\prime\subset T\}$ and $\CE_h(\mathring{T})=\{e\in\CE_h: e\subset T, e\not\subseteq\partial T\}$. Then for $v_h\in U_h|_T,$ we have
\begin{equation*} 
h_T^{-2}\|v_h-Q_Tv_h\|^2_{T}\lesssim\sum_{T^\prime\in\Th(T)}\|\varepsilon(v_h)\|_{T^\prime}^2+\sum_{e\in\CE_h(\mathring{T})}h_e^{-1}\|\lr{v_h}\|_e^2.
\end{equation*}
\end{lemma}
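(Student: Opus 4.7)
The plan is to lift the elementwise polynomial $v_h$ to a globally continuous function $w\in H^1(T,\mathbb{R}^2)$ by Oswald-type averaging, apply the continuous Korn inequality from Lemma \ref{Korn1} to $w$ modulo $\mathcal{RM}$, and control the remainder terms by the interior jumps of $v_h$ on $\CE_h(\mathring T)$.

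I define $w:=Ev_h$ on $T$ by assigning, at each Lagrange node $a$ of the $\mathcal{P}_{r+2}$ element in $\overline{T}$, the arithmetic average of the values $v_h|_{T'}(a)$ over the subelements $T'\in\Th(T)$ containing $a$. The resulting $w$ lies in $C^0(\overline T,\mathbb{R}^2)\cap H^1(T,\mathbb{R}^2)$ and is piecewise in $\mathcal{P}_{r+2}$. A standard scaling argument on the shape-regular mesh yields the Oswald estimate
\begin{equation*}
\sum_{T'\in\Th(T)}\bigl(h_{T'}^{-2}\|v_h-w\|_{T'}^2+\|\nabla(v_h-w)\|_{T'}^2\bigr)\lesssim\sum_{e\in\CE_h(\mathring T)}h_e^{-1}\|\lr{v_h}\|_e^2,
\end{equation*}
where only edges in $\CE_h(\mathring T)$ contribute because $v_h$ and $w$ coincide at nodes lying in a unique subelement.

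Applying Lemma \ref{Korn1} to $w-Q_Tw$, whose symmetric gradient equals $\varepsilon(w)$ (since $\mathcal{RM}\subset\ker\varepsilon$) and whose $Q_T$-image vanishes by idempotency, gives $|w-Q_Tw|_{H^1(T)}\lesssim\|\varepsilon(w)\|_T$. Since constants belong to $\mathcal{RM}$, $w-Q_Tw$ has zero mean on $T$, so Poincaré yields $\|w-Q_Tw\|_T\lesssim h_T\|\varepsilon(w)\|_T$.

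Finally I assemble the bound. Using the triangle inequality together with $\|Q_T\phi\|_T\leq\|\phi\|_T$,
\begin{equation*}
\|v_h-Q_Tv_h\|_T\leq 2\|v_h-w\|_T+\|w-Q_Tw\|_T\lesssim\|v_h-w\|_T+h_T\|\varepsilon(w)\|_T,
\end{equation*}
while a second triangle inequality combined with the Oswald estimate gives
\begin{equation*}
\|\varepsilon(w)\|_T^2\lesssim\sum_{T'\in\Th(T)}\|\varepsilon(v_h)\|_{T'}^2+\sum_{e\in\CE_h(\mathring T)}h_e^{-1}\|\lr{v_h}\|_e^2.
\end{equation*}
Inserting these bounds, using $h_e\lesssim h_T$ to absorb $\|v_h-w\|_T^2$ into the jump term, and dividing by $h_T^2$ completes the proof. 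The main technical step is the Oswald estimate in the first part; the rest is a routine combination of the continuous Korn and Poincaré inequalities with elementary triangle inequalities.
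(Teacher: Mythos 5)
Your proof is correct and follows essentially the same route as the paper: Oswald nodal averaging to a continuous piecewise $\mathcal{P}_{r+2}$ function with the jump-controlled error estimate, the modified Korn inequality of Lemma \ref{Korn1}, a Poincar\'e inequality, and triangle inequalities. The only (harmless) cosmetic difference is that you apply Lemma \ref{Korn1} to $w-Q_Tw$ so the $h_T^{-1}\|Q_T\cdot\|_T$ term vanishes by idempotency, whereas the paper keeps that term and absorbs it via $Q_T(v_h-Q_Tv_h)=0$ and the averaging error bound.
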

\begin{proof}
Let $w=v_h-Q_Tv_h$ and  $|w|_{H^1_h(T)}^2:=\sum_{T^\prime\in\Th(T)}|w|^2_{H^1(T^\prime)}$. Let \begin{equation*}
    S_h(T)=\{\tilde{v}\in C^0(T): \tilde{v}|_{T^\prime}\in\mathcal{P}_{r+2}(T^\prime)\text{ for }T^\prime\in\Th(T)\}
\end{equation*}
be the usual Lagrange element space of degree $r+2$. Following the analysis in \cite{Brenner2003,BrennerScott2008,HuangXu2012}, we construct a continuous piecewise polynomial function $Ew\in S_h(T)$ by setting the nodal value as
\begin{align*}
     Ew(x)=\frac{1}{\#\omega_{h,x}}\sum_{T^\prime\in\omega_{h,x}}(w|_{T^\prime})(x),
\end{align*}
where $x$ is a Lagrange node for the space $S_h(T)$ and $\omega_{h,x}=\{T^\prime\in\Th(T): x\in T^\prime\}$. An elementary estimate shows that
\begin{align}\label{errorE}
    h_T^{-2}\|w-Ew\|^2_T+|w-Ew|^2_{H^1_h(T)}\lesssim \sum_{e\in{\CE}_h(\mathring{T})}h_{e}^{-1}\|\lr{w}\|_e^2,
\end{align}
see, e.g., the proof of Lemma 10.6.6 in \cite{BrennerScott2008} and Lemma 2.8 in \cite{HuangXu2012}.
For the continuous function $Ew$, the Poincar\'e inequality implies
\begin{align}\label{Poin}
    \|Ew-Q_T{Ew}\|_T&\lesssim h_T|Ew|_{H^1(T)},
\end{align}
Using \eqref{errorE}, \eqref{Poin}, the triangle inequality, and $Q_Tw=0,$ we have
\begin{equation}\label{disPoin}
    \begin{aligned}
    \|w\|^2_T&\lesssim\|w-Ew\|_T^2+\|Q_T({w-Ew})\|_T^2+\|Ew-Q_T{Ew}\|_T^2\\
    &\lesssim h_T^2\sum_{e\in{\CE}_h(\mathring{T})}h_{e}^{-1}\|\lr{w}\|_e^2+h_T^2|Ew|^2_{H^1(T)}.
\end{aligned}
\end{equation}
It remains to estimate $|Ew|_{H^1(T)}.$ 
Lemma \ref{Korn1} implies
\begin{align}\label{Korn2}
    |Ew|_{H^1(T)}\lesssim\|\varepsilon(Ew)\|_{T}+h_T^{-1}\|Q_TEw\|_T.
\end{align}
It then follows from the triangle inequality, \eqref{Korn2}, $\varepsilon(Q_Tv_h)=0$, and $Q_T(w)=0$  that
\begin{equation}\label{disKorn}
    \begin{aligned}
    |Ew|_{H^1(T)}^2&\lesssim\sum_{T^\prime\in\Th(T)}\big(\|\varepsilon(w)\|_{T^\prime}^2+\|\varepsilon(w-Ew)\|_{T^\prime}^2\big)+h_T^{-2}\|Q_T(Ew-w)\|_T^2\\
    &\leq\sum_{T^\prime\in\Th(T)}\|\varepsilon(v_h)\|_{T^\prime}^2+|w-Ew|_{H^1_h(T)}^2+h_T^{-2}\|w-Ew\|_{T}^2.
\end{aligned}
\end{equation}
Combining \eqref{disPoin}, \eqref{disKorn}, \eqref{errorE}, and $\lr{w}_e=\lr{v_h}_e$ completes the proof.
\end{proof}

For $v_h\in U_h$, define the mesh-dependent norm 
\begin{equation*} 
|v_h|_{1,h}:=\big(\sum_{T\in\Th}\|\varepsilon(v_h)\|_T^2+\sum_{e\in\CE_h}h_e^{-1}\|\lr{v_h}\|_e^2\big)^\frac{1}{2}.
\end{equation*} 
It has been shown in \cite{CHH2018b} that the following discrete inf-sup condition holds:
\begin{align}\label{infsup1}
    |v_h|_{1,h}\lesssim\sup_{\tau_h\in\Sigma_h^{\text{HZ}}}\frac{(\divg\tau_h,v_h)}{\|\tau_h\|}\text{ for all }v_h\in U_h.
\end{align}
With the above preparation, we are able to prove Lemma \ref{disapprox}.
\begin{proof}[Proof of Lemma \ref{disapprox}]
Using the inf-sup condition \eqref{infsup1} and the inclusion $\Sigma_h^{\text{HZ}}\subset\Sigma_h$, we obtain
\begin{align}\label{infsup2}
    |v_h|_{1,h}\lesssim\sup_{\tau_h\in\Sigma_h}\frac{(\divg\tau_h,v_h)}{\|\tau_h\|}=\sup_{\tau_h\in\Sigma_h}\frac{(\mathbb{A}\tau_h,\varepsilon_\mathbb{C}^h(v_h))}{\|\tau_h\|}.
\end{align}
It then follows from \eqref{infsup2} and $\|\tau_h\|_\mathbb{A}\lesssim\|\tau_h\|$ that
\begin{align}
    |v_h|_{1,h}\lesssim\|\varepsilon_\mathbb{C}^h(v_h)\|_\mathbb{A}.
\end{align}
Combining it with Lemma \ref{disapprox1}, we have
\begin{equation*}
\begin{aligned}
\sum_{T\in\CT_H}h_T^{-2}\|v_h-Q_H v_h\|_T^2&\lesssim\sum_{T\in\CT_H}\big(\sum_{T^\prime\in\Th(T)}\|\varepsilon(v_h)\|_{T^\prime}^2+\sum_{e\in\CE_h(\mathring{T})}h_e^{-1}\|\lr{v_h}\|_e^2\big)\\
&\lesssim|v_h|^2_{1,h}\lesssim\|\varepsilon_\mathbb{C}^h(v_h)\|_\mathbb{A}^2,
\end{aligned}\end{equation*}
which completes the proof.
\end{proof} 

\section{Implementation and numerical experiment}\label{secNE}
The method \eqref{dismix} can be implemented using the hybridization technique.
Consider the multiplier space
\begin{align*}
    M_h=\{\mu_h: \mu_h|_e\in\mathcal{P}_{r+3}(e,\mathbb{R}^2)\text{ for all }e\in\CE^o_h\},
\end{align*}
and the broken discrete stress space
\begin{align*}
    \Sigma_h^{-1}&=\{\tau_h\in L^2(\Omega,\mathbb{S}): \tau_h|_T\in\mathcal{P}_{r+3}(T)\text{ for all }T\in\Th\}.
\end{align*}
The hybridized mixed method seeks $(\tilde{\sigma}_h,\tilde{u}_h,\lambda_h)\in\Sigma_h^{-1}\times U_h\times M_h$ such that
\begin{equation}\label{hdismix}
    \begin{aligned}
    (\mathbb{A}\tilde{\sigma}_h,\tau_h)+\sum_{T\in\Th}(\divg\tau_h,\tilde{u}_h)_T+\sum_{e\in\CE^o_h}\int_e\lambda_h\cdot\lr{\tau_h}n_eds&=\langle\tau_hn,g_D\rangle_{\Gamma_D},\\
    \sum_{T\in\Th}(\divg\tilde{\sigma}_h,v_h)_T&=(f,v_h),\\
    \sum_{e\in\CE^o_h}\int_e\mu_h\cdot\lr{\tilde{\sigma}_h}n_eds&=0,
\end{aligned}
\end{equation}
for all $\tau\in\Sigma^{-1}_h,$ $v_h\in U_h$, $\mu_h\in M_h$.
In fact, \eqref{hdismix} is a hybridized version of \eqref{dismix}, i.e., $\tilde{\sigma}_h=\sigma_h, \tilde{u}_h=u_h$, see \cite{GWX2019,ArnoldBrezzi1985}. Because $\Sigma^{-1}_h,$ $U_h$, $M_h$ are completely broken, it is straightforward to construct their local basis. In matrix notation, \eqref{hdismix} reads
\begin{equation}\label{matrixh}
    \begin{pmatrix}A&B\\B^\top&O\end{pmatrix}\begin{pmatrix}X\\ \Lambda\end{pmatrix}=\begin{pmatrix}F\\O\end{pmatrix},
\end{equation}
where $O$ is a zero matrix or vector, $X$ and $\Lambda$ are vectors corresponding to the coordinates of $(\tilde{\sigma}_h,\tilde{u}_h)$ and $\lambda_h,$ respectively.

Due to the discontinuity of $\Sigma_h^{-1}$ and $U_h$, the matrix $A$ is block diagonal and easily invertible. Hence solving \eqref{matrixh} is equivalent to solving the smaller Schur complement system 
\begin{align}\label{spsd}
    B^\top A^{-1}B\Lambda=B^\top A^{-1}F.
\end{align}
Here $B^\top A^{-1}B$ is a sparse and positive semi-definite matrix and the size of $B^\top A^{-1}B$ is much smaller than \eqref{hdismix} or \eqref{dismix}. However,  the Schur complement $B^\top A^{-1}B$ has a small kernel provided $\Th$ has singular vertices and/or pure traction boundary condition ($\Gamma_D=\emptyset$) is considered. The key point is that such kernel could be easily resolved by iterative methods such as the preconditioned conjugate gradient method. An optimal preconditioner for the Schur complement system \eqref{spsd} is presented in \cite{GWX2019}.

In the experiment, let $\Omega=[-1,1]^2\backslash([0,1]\times[-1,0])$ be the L-shaped domain.
Let $(r,\theta)$ be the polar coordinate with respect to the origin, where $0\leq\theta\leq\omega=\frac{3\pi}{2}$. Let
\begin{equation*}
\begin{aligned}
\Phi_1(\theta)&=\begin{pmatrix}((z + 2)(\lambda + \mu) + 4\mu
)
\sin(z\theta)- z(\lambda + \mu) \sin((z-2)\theta)\\
z(\lambda + \mu)
(
\cos(z\theta)- \cos((z- 2)\theta))\end{pmatrix},\\
    \Phi_2(\theta)&=\begin{pmatrix}z(\lambda + \mu)(\cos((z -2)\theta)- \cos(z\theta))\\
    -((2-z)(\lambda + \mu) + 4\mu) \sin(z\theta)- z(\lambda + \mu) \sin((z- 2)\theta)\end{pmatrix},
    \end{aligned}
\end{equation*}
and
\begin{equation*}
\begin{aligned}
    \Phi(\theta)&=\{z(\lambda+ \mu) \sin((z-2)\omega) + ((2-z)(\lambda + \mu) + 4\mu)\sin(z\omega)\}\Phi_1(\theta)\\
    &\quad-z(\lambda+ \mu)(\cos((z- 2)\omega)- \cos(z\omega))\Phi_2(\theta).
    \end{aligned}
\end{equation*}
where $z\in(0,1)$ is a root of $(\lambda+3\mu)^2\sin^2(z\omega) = (\lambda+\mu)^2z^2\sin^2(\omega)$.
The most singular part of the solution to \eqref{ctsLE} behaves like $r^z\Phi(\theta)$ in the neighborhood of $(0,0)$, see, e.g., \cite{Grisvard1992}. Therefore we choose 
\begin{equation*}
    u(r,\theta)=\frac{1}{(\lambda+\mu)^2}(x_1^2-1)(x_2^2-1)r^z\Phi(\theta)
\end{equation*}
as the exact solution in the test problem. The boundary condition is based on pure displacement ($\Gamma_N=\emptyset$). The Lam\'e constants are $\lambda=10^4$ and $\mu=1.$ The method \eqref{dismix} or \eqref{hdismix} is implemented using the package iFEM \cite{iFEM} in Matlab 2019a. We start with the initial mesh in Figure \ref{mesh} and set the marking parameter $\theta=0.3$. The algebraic system \eqref{spsd} is solved by the conjugate gradient method preconditioned by the incomplete Cholesky decomposition. Numerical results are presented in Figure \ref{ErrorCurve}, where nt denotes the number of triangles.

It can be observed from Figure \ref{mesh}(right) that the adaptive algorithm \ref{AMFEM} captures the corner singularity. Figure \ref{ErrorCurve} shows that Algorithm \ref{AMFEM} has optimal and robust rate of convergence with respect to very large Lam\'e constant $\lambda$ starting from coarse initial grid, which validates our convergence and complexity analysis. 
\begin{figure}[tbhp]
\centering
\includegraphics[width=12cm,height=5.0cm]{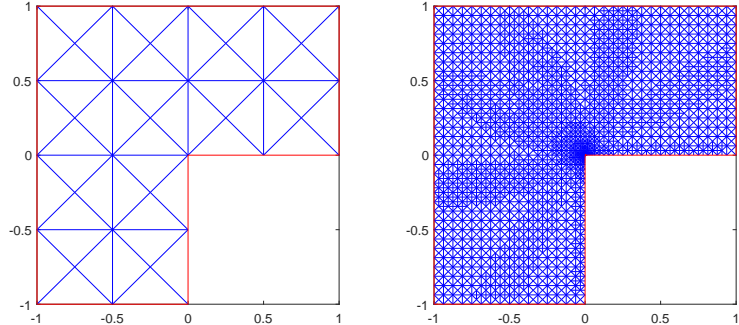}
\caption{(left)Initial grid. (right)Adaptive grid, 5290 elements.}
\label{mesh}
\end{figure}

\begin{figure}[tbhp]
\centering
\includegraphics[width=8cm,height=5cm]{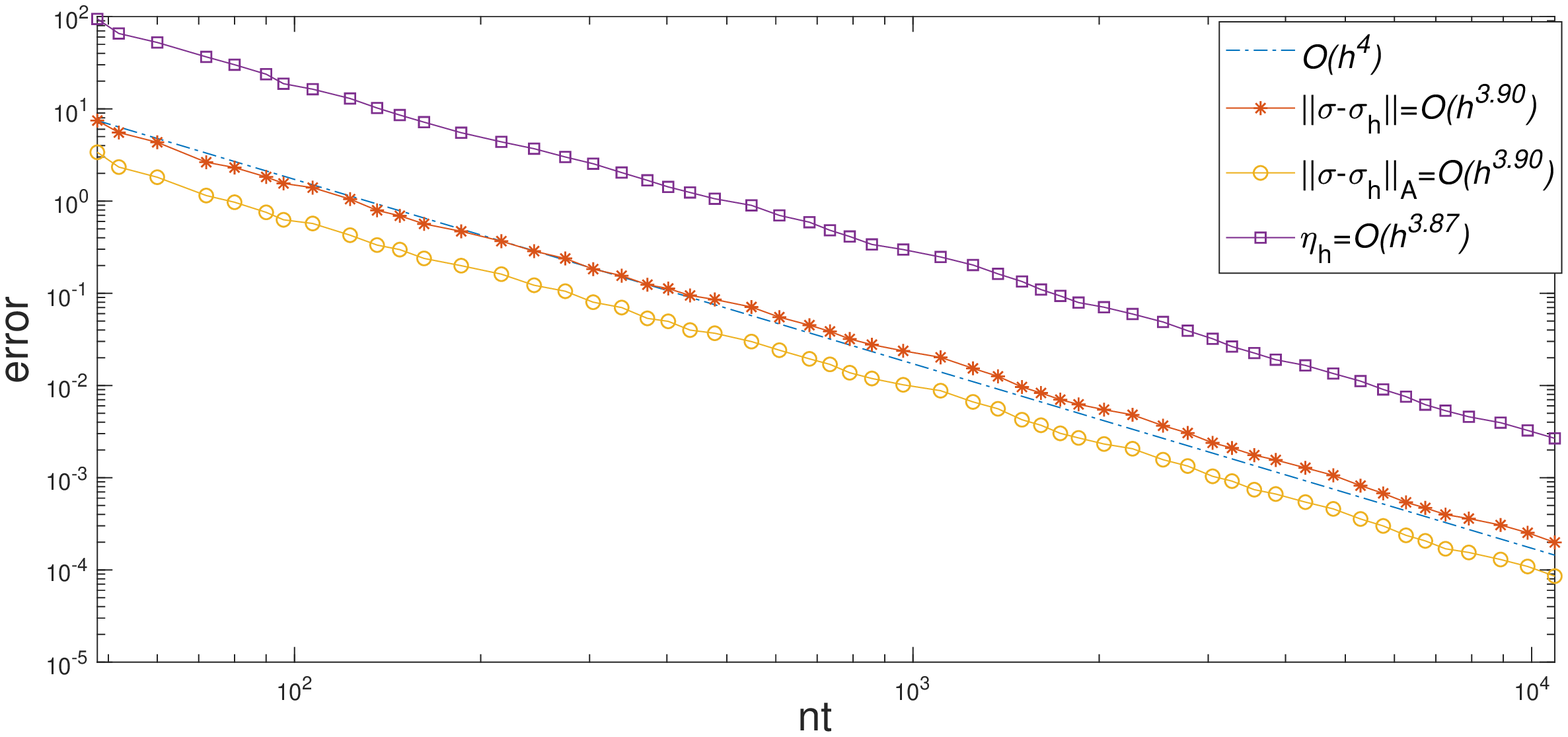}
\caption{Error curve}
\label{ErrorCurve}
\end{figure}

%\section*{Appendix}
%In the appendix, we present an a posteriori error estimate for the displacement error. We start with the inf-sup condition \eqref{infsup} and obtain
%\begin{equation}
%    \|u_h-u_H\|\lesssim\sup_{\tau_h\in\Sigma_h}(\divg\tau_h,u_h-u_H).
%\end{equation} 
%Consider the discrete decomposition $\tau_h=Jw_h+\varepsilon_\mathbb{C}^h(v_h)$ from Theorem \ref{disdecomp}. Let $v_H=Q_Hv_h$. Therefore we have
%\begin{equation}
%\begin{aligned}
%&(\divg\varepsilon_\mathbb{C}^h(v_h),u_h-u_H)=(\divg\varepsilon_\mathbb{C}^h(v_h-v_H),u_h-u_H)-(\mathbb{A}(\sigma_h-\sigma_H),\varepsilon_\mathbb{C}^h(v_H))\\
%&=(\divg\varepsilon_\mathbb{C}^h(v_h-v_H),u_h)+(\varepsilon_\mathbb{C}^h(v_h-v_H),\varepsilon_H(u_H))-\sum_{T}\langle\varepsilon_\mathbb{C}^h(v_h-v_H)n,u_H \rangle_{\partial T}\\
%&=(\divg\varepsilon_\mathbb{C}^h(v_h-v_H),u_h)+(\mathbb{A}\varepsilon_\mathbb{C}^h(v_h-v_H),\sigma_h)-(\mathbb{A}\varepsilon_\mathbb{C}^h(v_h-v_H),\sigma_h-\sigma_H)\\
%&+(\varepsilon_\mathbb{C}^h(v_h-v_H),\varepsilon_h(u_H)-\mathbb{A}\sigma_H)-\sum_{e}\langle\varepsilon_\mathbb{C}^h(v_h-v_H)n,\lr{u_H}\rangle_e
%\end{aligned}
%\end{equation}

\section*{Acknowledgements}
The author would like to thank Dr.~Shihua Gong for generously sharing his Matlab code and comments on iterative methods.
%\bibliographystyle{siamplain}
%\bibliography{totalref}

\providecommand{\bysame}{\leavevmode\hbox to3em{\hrulefill}\thinspace}
\providecommand{\MR}{\relax\ifhmode\unskip\space\fi MR }
% \MRhref is called by the amsart/book/proc definition of \MR.
\providecommand{\MRhref}[2]{%
  \href{http://www.ams.org/mathscinet-getitem?mr=#1}{#2}
}
\providecommand{\href}[2]{#2}

\end{document}

%%%%%%%%%%%%%%%%%%%%%%%%%%%%%%%%%%%%%%%%%%%%%%%%%%%%%%%%%%%%%%%%%%%%%%%%%%%%%%